\newtheorem{Th}{Theorem}
\newtheorem{Prop}[Th]{Proposition}
\newtheorem{Lm}[Th]{Lemma}
\newtheorem{Co}[Th]{Corollary}
\newtheorem{Conj}[Th]{Conjecture}
\theoremstyle{definition}
\newtheorem{Def}[Th]{Definition}
\newcommand{\aut}{\mathrm{Aut}}
\newcommand{\supp}{\mathrm{supp}}
\newcommand{\tr}{\mathrm{tr}}
\newcommand{\st}{\mathrm{St}}
\newcommand{\mr}{\mathrm{r}}
\newcommand{\Sub}{\mathrm{Sub}}
\newcommand{\diag}{\mathrm{diag}}
\newcommand{\fix}{\mathrm{Fix}}
\newcommand{\sym}{\mathrm{Sym}}
\newcommand{\rist}{\mathrm{rist}}
\newcommand{\id}{\mathrm{Id}}
\newcommand{\eg}{\text{e.g.\,}}
\newcommand{\ie}{\text{i.e.}}
\begin{document}
\title{On diagonal actions of branch groups and the corresponding characters}
\author{ {\bf Artem Dudko}  \\
                    IMPAN, Warsaw, Poland  \\
          adudko@impan.pl \\
         {\bf Rostislav Grigorchuk} \\        Texas A\&M University, College Station, TX, USA  \\      grigorch@math.tamu.edu }


\maketitle

\begin{abstract} We introduce notions of absolutely non-free and perfectly non-free group actions and use them to study the associated unitary representations. We show that every weakly branch group acts absolutely non-freely on the boundary of the associated rooted tree. Using this result and the symmetrized diagonal actions we construct for every countable branch group infinitely many different ergodic  perfectly non-free actions, infinitely many II$_1$-factor representations, and infinitely many continuous ergodic invariant random subgroups.
\end{abstract}

\noindent{\small MSC: 20C15,  20E08,  37A05. \\
Key words:  non-free action, character,  factor representation, branch group.}

\section{Introduction.}
Branch groups were introduced by the second author in \cite{Gr00}.
The class of branch groups contains many examples
of groups with remarkable properties. Among them is the group of intermediate
growth constructed in \cite{Gr80} and studied in \cite{Gr84} to answer Milnor's
question on growth and Day's question on existence of a non-elementary  amenable group.
Weakly branch groups form a more general class of groups acting on spherically homogeneous
rooted trees. It contains, for instance, the Basilica group, which is the first example of
 amenable but not subexponentially amenable group and is an iterated monodromy group of the map
  $f(z)=z^2-1$ \cite{Nekr}. Branch and weakly branch groups have many important applications in group theory, combinatorics, spectral theory, holomorphic dynamics, probability theory, \textit{etc}. This paper is dedicated to studying diagonal actions of weakly branch groups and associated characters and factor representations. 

Our results are related to the notion of non-free actions studied by A. Vershik in \cite{V11} and \cite{V12}. An action of a group $G$ on a Lebesgue space (also called standard probability space) is \emph{totally non-free} if the sigma-algebra generated by the sets $\fix(g)$ of fixed points of elements $g\in G$ is the whole sigma-algebra $\Sigma$ of measurable subsets of this space. An action of $G$ is called \emph{extremely non-free} if on a set of a full measure, different points have different stabilizers. In \cite{V11}, Theorem 8, Vershik showed that for countable groups these two notions coincide. It is known \cite{Grig11} that the action of a weakly branch group $G$ on the boundary $\partial T$ of the associated rooted tree $T$ equipped with the unique invariant measure $\mu$ is totally non-free. Here we show that it has a stronger non-freeness. Namely, the sets $\fix(g),g\in G$ approximate measurable subsets of $\partial T$ arbitrarily well in $\mu$-measure. We call actions satisfying this property \emph{absolutely non-free}. We also introduce the notion of \emph{perfect non-freeness} of actions, which is stronger than total non-freeness, but weaker than absolute non-freeness. As a corollary of our results, we obtain that an appropriate factor of the diagonal action of a countable weakly branch group $G$ on $(\partial T,\mu)^{\otimes n}$ for any $n\in\mathbb N$ is perfectly non-free. Another consequence is that every branch group has infinitely many pairwise non-isomorphic ergodic perfectly (and hence totally) non-free measure-preserving actions, and therefore infinitely many ergodic continuous invariant random subgroups. We apply the notion of a perfectly non-free action to study associated characters and factor representations.


By a character $\chi$ on a group $G$ we mean a normalized (by $\chi(e)=1$ for the unit $e$ of the group), positive semi-definite complex valued function on $G$ that is constant on conjugacy classes. Indecomposable characters are extreme points in the simplex of characters and are in one-to-one correspondence with quasi-equivalence classes of finite type factor representations of $G$. Group characters  and  factor  representations  is a  classical  topic  of  the  theory of unitary representations of  locally compact  groups  and  operator  algebras \cite{Dix69}\footnote{To avoid confusion we notice that in \cite{Dix69} indecomposability is included in the definition of a character and characters are allowed to have infinite values. In modern terminology group characters usually are assumed to have finite values. In some literature indecomposability is included in the definition of a character (see \eg \cite{Be07} and \cite{Ok}) and in some literature it is not (as in the present paper; see \eg \cite{PetThom} and \cite{V11}). }. There are examples of countable groups with uncountably many  (\cite{VK}, \cite{DN07 Wreath}) or countably many of indecomposable characters (\cite{Sk}, \cite{Be07}, \cite{DM13 AF}). On the other hand, there are infinite groups admitting only two indecomposable characters: the trivial and the regular one (\cite{Ov71}, \cite{Pet}, \cite{PetThom}, \cite{DM14 Thompson}).

Important examples of characters on a group $G$ are given by the functions of the form $\chi(g)=\mu(\fix(g)),g\in G$, where $G$ acts by measure-preserving transformations on a Lebesgue space $(X,\mu)$. In \cite{V11} Vershik suggested that for an ergodic action of a countable group the corresponding character is indecomposable if and only if the action is totally non-free. It turns out, the relation between total non-freeness of actions and indecomposability of characters is more complicated. More precisely, we show in the Appendix that there exist ergodic totally non-free actions producing decomposable characters and there exist free actions producing indecomposable characters. However, we show in Theorem \ref{ThIndecomp} that for any ergodic perfectly non-free action of a countable group, the corresponding character is indecomposable. As a consequence of our results, we obtain for any countable branch group an explicit countably infinite collection of indecomposable characters and hence an explicit countably infinite collection of factor representations of type II$_1$.

The paper is organized as follows. In Section \ref{SecPrelim}, we recall the basic facts about characters, factor representations, group actions on rooted trees and branch groups. Also, we discuss Vershik's notions of extremely non-free and totally non-free actions and their relation to invariant random subgroups. In Section \ref{SecMain}, we introduce the notions of absolutely non-free and perfectly non-free actions and formulate the main results. In Section \ref{SecNF}, we provide the proofs of Theorems \ref{ThIndecomp} and \ref{ThANFimpliesPNF} concerning non-free actions of arbitrary countable groups. Also, we give a detailed proof of Vershik's Theorem \ref{ThTNFCond} which is only sketched in \cite{V11}. In Section \ref{SecANF}, we show that every weakly branch group acts absolutely non-freely on the boundary of the associated rooted tree. Section \ref{SecErg} is devoted to the ergodic decomposition of the diagonal actions. The results obtained there are used in Section \ref{SecChar} to show that any branch group admits infinitely many indecomposible characters.


\section{Preliminaries.}\label{SecPrelim}
In this section we give necessary preliminaries on representation theory, groups acting on rooted trees and totally non-free actions, and formulate the main results.
\subsection{Factor representations and characters.}\label{SubsecReps}
We start by briefly recalling some important notions from the theory of operator algebras (see \cite{KR1} and
 \cite{KR2} for details).
 \begin{Def}\label{DefWstar} An algebra $\mathcal M$ of bounded operators acting on a Hilbert space is called a von Neumann (for short $W^*$-) algebra if it is closed in the weak operator topology.
A $W^*$-algebra $\mathcal M$ is called a factor if its center consists only of multiples of the identity operator. \end{Def}
\noindent Notice that each von Neumann algebra on a separable Hilbert space can be written as a direct integral of factors (see \eg \cite{KR2}, Theorems 14.2.2 and 14.2.3). In this paper we will be concerned only with finite type $W^*$-algebras acting on separable Hilbers spaces. Such algebras admit a faithful finite trace (see \eg \cite{KR2}).

A unitary representation $\pi$ of a group $G$ is called a  \emph{factor representation} if the $W^*$-algebra $\mathcal M_\pi$ generated by the operators $\pi(g),g\in G$, is a factor.
\begin{Def}
A {\it character} on a group $G$ is a function $\chi:G\rightarrow
\mathbb{C}$ satisfying the following properties:
\begin{itemize}
\item[(1)] $\chi(g_1g_2)=\chi(g_2g_1)$ for any $g_1,g_2\in G$;
\item[(2)] the matrix
$\left\{\chi\left(g_ig_j^{-1}\right)\right\}_{i,j=1}^n$ is
positive semi-definite for any integer $n\geq 1$ and any elements $g_1,\ldots,g_n\in G$;
\item[(3)] $\chi(e)=1$, where $e$ is the  identity element of $G$.
\end{itemize} A character $\chi$ is called {\it indecomposable} if it
cannot be represented in the form $\chi=\alpha
\chi_1+(1-\alpha)\chi_2$, where $0<\alpha<1$ and $\chi_1,\chi_2$ are
distinct characters.
\end{Def} The character given by $\chi(g)=1$ for all $g\in G$ is called the trivial character. The character $\chi(g)=\delta_{g,e}$, where delta stands for the Kronecker delta-symbol, is called the regular character. Notice that the trivial character is indecomposable for any group $G$. The regular character is indecomposable if and only if the group $G$ is ICC (all conjugacy classes except the class of the identity element are infinite).

Two unitary representations $\pi_1,\pi_2$ of $G$ are called \emph{quasi-equivalent} if there is a von Neumann  algebra isomorphism $\omega:\mathcal M_{\pi_1}\to\mathcal M_{\pi_2}$ such that
$$\omega(\pi_1(g))=\pi_2(g)\;\;\text{for all}\;\;g\in G.$$
The Gelfand-Naimark-Siegal (abbreviated "GNS") con\-struc\-tion associates to a character $\chi$ on $G$ a triple $(\pi,\mathcal{H},\xi)$,
 where $\pi$ is a unitary representation of $G$ acting on a Hilbert space $\mathcal{H}$ and $\xi$ is
 unit vector in $\mathcal{H}$ such that
 $$(\pi(g)\xi,\xi)=\chi(g)\;\;\text{for all}\;\;g\in G\;\;(\text{see \eg}\;\;\cite{DM13 AF}).$$ Moreover, $\xi$ is
  \emph{cyclic} (linear combinations of the vectors of the form $m\xi$, where $m\in\mathcal{M}_\pi$, are
  dense in $\mathcal{H}$) and separating ($A\xi=0$ for $A\in\mathcal{M}_\pi$ implies $A=0$)
  for $\mathcal{M}_\pi$. The function $$\tr(m)=(m\xi,\xi),\;\;\text{where}\;\;m\in\mathcal{M}_\pi,$$ is a faithful normalized ($\tr(\id)=1$) trace on $\mathcal{M}_\pi$. The following fact is well-known.
  \begin{Prop}\label{PropGNS} For a countable group $G$ the GNS-construction gives a bijection between the indecomposable characters of $G$ and the classes of quasy-equivalence of finite type factor representations of $G$.
  \end{Prop}
  \noindent For the reader's convenience we provide a sketch of the proof of Proposition \ref{PropGNS}.
  \begin{proof} Let $\chi$ be a character of a countable group $G$ and $(\pi,\mathcal H,\xi)$ be the corresponding GNS-construction. Then $\mathcal M_\pi$ is a finite type algebra since it admits a finite faithful trace $\tr(m)=(m\xi,\xi)$, $m\in\mathcal M_\pi$. Assume that $\chi$ is indecomposable but $\mathcal M_\pi$ is not a factor. Let $P$ be a projection from the center of $\mathcal M_\pi$ such that $P\neq 0$ and $P\neq\id$. Let
  \begin{align*}\alpha=\|P\xi\|^2,\;\xi_1=
  \frac{1}{\sqrt\alpha}P\xi,\;\xi_2=\frac{1}{\sqrt{1-\alpha}}(\id-P)\xi,\\ \chi_1(g)=(\pi(g)\xi_1,\xi_1),
  \;\chi_2(g)=(\pi(g)\xi_2,\xi_2),\;g\in G.\end{align*} Then $\chi_1,\chi_2$ are characters of $G$. Since $\chi=\alpha\chi_1+(1-\alpha\chi_2)$ by the indecomposability of $\chi$ we obtain that $\chi_1=\chi_2$. Using this one can show that the map $A$ given by
  $$A\pi(g)\xi_1=\pi(g)\xi_2,\;\;A\pi(g)\xi_2=\pi(g)\xi_1,\;\;g\in G$$ extends by linearity and continuity to a bounded linear operator on $\mathcal H$. Moreover, $A$ commutes with $\pi(G)$, therefore belongs to the commutant of $\mathcal M_\pi$. But clearly $A$ does not commute with $P$. This contradicts the fact that $P$ is from the center of $\mathcal M_\pi$. Thus, indecomposable characters give rise to finite type factor representations.

  On the other hand, theorem 4.5.2 from \cite{KR1} implies that up to quasi-equivalence every finite type factor representation arises as a GNS-construction for some character $\chi$. Since by Theorem 8.2.8 from \cite{KR2} every finite type factor admits a unique normalized trace, GNS-constructions corresponding to distinct indecomposable characters cannot produce quasi-equivalent factor representations.

  Finally, assume that $\chi$ is not indecomposable: $\chi=\alpha\chi_1+(1-\alpha)\chi_2$, where $0<\alpha<1$ and $\chi_1,\chi_2$ are distinct characters of $G$. Let $(\pi_i,\mathcal H_i,\xi_i)$ be the GNS-construction associated to $\chi_i$ and $\tr_i$ be the corresponding trace on $\mathcal M_{\pi_i}$, $i=1,2$.
  Then $\pi$ is unitary equivalent to $\pi_1\oplus \pi_2$ and $\mathcal M_\pi$ is a subalgebra in $\mathcal M_{\pi_1}\oplus\mathcal M_{\pi_2}$. One can check that for any $0<\beta<1$ the function
  $\tr_\beta=\beta\tr_1\oplus(1-\beta)\tr_2$ is a normalized trace on $\mathcal M_\pi$. Moreover, for distinct $\beta$ the corresponding traces are distinct. By the uniqueness of $\tr$ on a factor we obtain that $\mathcal M_\pi$ is not a factor.
  \end{proof}
\noindent  Notice, given a finite type factor representation $\pi$ of $G$ with normalized trace $\tr$ on $\mathcal M_\pi$ the associated indecomposable character is given by $$\chi(g)=\tr(\pi(g)),g\in G.$$

One important class of examples of characters arises from group actions. Namely, let $(X,\mu)$ be a probability space with a measure preserving action of a group $G$ on it. Then the function $$\chi(g)=\mu(\fix(g)),\;\;\text{where}\;\;\fix(g)=\{x\in X:gx=x\},$$ is a character. The corresponding representation can be constructed in the following way. Denote by $\mathcal{\mathcal{R}}$  the orbit equivalence relation on $X$. For a measurable subset $A\subset X^2$ and $x\in X$ set $A_x=A
 \cap (X\times\{x\})$. Introduce a measure $\nu$ on $\mathcal{\mathcal{R}}\subset X^2$ by
 $$\nu(A)=\int\limits_X |A_x|d\mu(x).$$ Notice
 that also $\nu(A)=\int\limits_X |A^y|d\mu(y)$, where $A^y=A
 \cap (\{y\}\times X)$. 
 \begin{Def}\label{DefGruppoid} The (left) groupoid representation of $G$ is the representation $\pi$
  in $L^2(\mathcal{\mathcal{R}},\nu)$ defined by
$$(\pi(g)f)((x,y))=f(g^{-1}x,y).$$
\end{Def}\noindent Denote by $\xi$ the unit vector
$\xi(x,y)=\delta_{x,y}\in L^2(\mathcal{\mathcal{R}},\nu)$, where $\delta_{x,y}$ is the Kronecker delta-symbol. It is straightforward to verify that
$$(\pi(g)\xi,\xi)=\mu(\fix(g))=\chi(g).$$ Moreover, the restriction of the representation $\pi$ on the cyclic hull of the vector $\xi$ is unitarily equivalent to the GNS-representation corresponding to $\chi$.

Denote by $\mathcal{M}_{\pi}$ the $W^*$-algebra generated by the operators of the representation $\pi$. In this paper we are also interested in another algebra. Namely, for a function $m\in L^\infty(X,\mu)$ introduce operators $m_l:L^2(\mathcal{R},\nu)\to L^2(\mathcal{R},\nu)$ by
$$(m_lf)(x,y)=m(x)f(x,y).$$
Denote by $\mathcal{M}_{\mathcal{R}}$ the $W^*$-algebra generated by $\mathcal{M}_{\pi}$
 and the operators $m_l,m\in L^2(X,\mu)$. This algebra is sometimes referred to as a Murray-von Neumann or Krieger algebra.
Observe that for an ergodic action of a group $G$ the algebra $\mathcal{M}_{\mathcal{R}}$ is a factor of finite type
  (see \cite{FM2}, Proposition 2.9(2)) and the vector $\xi$ is cyclic and separating for $M_{\mathcal R}$ (see \cite{FM2}, Proposition 2.5).

\subsection{Groups acting on rooted trees.}
In this subsection we give all necessary preliminaries on groups acting on rooted trees. We refer the reader to \cite{Grig11}, \cite{GNS00} and  \cite{Nekr} for the details. Here we will focus on the case of regular rooted trees. Some of the results of the present paper are true for a more general class of spherically homogeneous rooted trees, however, the proofs become more technical.

  Let $\mathcal A_d$ be a finite alphabet and $d\geqslant 2$ be the number of elements of $\mathcal A_d$. The vertex set of the regular rooted tree $T=T_d$ determined by $A_d$ is the set of all finite words over $\mathcal A_d$. Two vertices $v$ and $w$ are connected by an edge if one can be obtained from another by adding or erasing one letter from $\mathcal A_d$ at the end of the word. The root of the tree is represented by the empty word.
\begin{Def} The automorphism group $\aut(T)$ is the group of all graph isomorphisms of the tree $T$ onto itself.
\end{Def} Notice that elements of $\aut(T)$ preserve the root and the levels of $T$. Denote by $V_n$ the set of vertices of the $n$-th level of $T$, that is the set of words of lengths $n$ over $\mathcal A_d$. For a vertex $v$ of $T$ denote by $T_v$ the subtree of $T$ with the root vertex $v$. By definition, the boundary $X=\partial T$ of $T$ is the set of all infinite paths without backtracking starting from the root. The set $X$ can be naturally identified with the set of all infinite words over $\mathcal A_d$ supplied with the product topology. For $v\in V_n$ let $X_v$ be the set of all paths from $X$ passing through $v$. Equip $X$ with the uniform Bernoulli measure, so that $\mu(X_v)=\tfrac{1}{d^n}$ for each $v\in V_n$. The measure $\mu$ is $\aut(T)$-invariant.

Let $G<\aut(T)$. For $n\in\mathbb N$ and a vertex $v\in V_n$ denote by $\st_G(v)$ the subgroup of all elements which fix $v$ and by $\st_G(n)$ the subgroup of all elements which fix each vertex of $V_n$:
$$\st_G(v)=\{g\in G:gv=v\},\;\;\st_G(n)=\bigcap\limits_{v\in V_n}\st_G(v). $$ Observe that $\st_G(n),n\in\mathbb N$ are normal subgroups of finite index in $G$.
 By definition, the rigid stabilizer $\rist_G(v)$ of a vertex $v$ is the subgroup of elements $g\in G$ which act trivially on the complement of the subtree $T_v$. The rigid stabilizer of $n$-th level is defined as the subgroup generated by the rigid stabilizers of the vertices $v$ from $V_n$, \ie $$\rist_G(n)=\langle\rist_G(v):v\in V_n\rangle.$$
\begin{Def}\label{DefBr} A group $G<\aut(T)$ is called \emph{branch} if it acts transitively on each level $V_n$ of $T$ and for each $n$ $\rist_G(n)$ is a finite index subgroup in $G$. A group $G<\aut(T)$ is called \emph{weakly branch} if it acts transitively on each level $V_n$ of $T$ and $\rist_G(n)$ is infinite for each $n$ (equivalently, $\rist_G(v)$ is nontrivial for each vertex $v$).
\end{Def}
Every branch group is weakly branch. From Proposition 6.5 in \cite{GNS00} it follows that $(G,\partial T,\mu)$ is ergodic iff the action is level transitive. The same holds for the properties of minimality, topological transitivity and unique ergodicity. In particular, weakly branch groups act on $(\partial T,\mu)$ ergodically.

\subsection{Vershik's non-free actions.}

 Let $(X,\Sigma,\mu)$ be a Lebesgue space (see \cite{Rokh} or  \cite{Glas03} for the definition and properties of a Lebesgue space). Neglecting subsets of measure $0$ without loss of generality we will assume that
 $(X,\Sigma,\mu)$ is
isomorphic to an interval with the Lebesgue measure, a finite or countable set of atoms, or a disjoint
union of both.
A countable collection
$\mathcal{F}$ of measurable subsets of $X$ \emph{separates} points of $X$ if
for all $x\neq y\in X$ there exists $A\in \mathcal{F}$ such that $x\in A, y\notin A$ or $y\in A, x\notin A$.
By definition, the sigma-algebra
generated by a collection of measurable sets $\mathcal{F}$ and by the measure $\mu$ consists of all
measurable sets $B\subset X$
for which there exists a set $A$ in the Borel sigma-algebra generated by $\mathcal{F}$
 such that $\mu(B\Delta A)=0$ (see \cite{Rokh}). There is a countable collection $\mathcal{F}$ of measurable subsets of $X$ separating points of $X$.   By the theorem on bases (see \cite{Rokh}, p. 22), $\mathcal{F}$ is a basis, that is in addition to separability, it
  generates $\Sigma$.

For a measurable automorphism $g$ of $X$ denote by
 $\fix(g)$ the set of fixed points of $g$: $\fix(g)=\{x\in X:gx=x\}.$ Notice that $\fix(g)$ is defined up to a set of zero measure. 
%
In \cite{V11} Vershik introduced two notions associated with non-free actions:
\begin{Def}\label{DefTotNonfree} A measure preserving action of a countable group $G$ on a Lebesgue space
$(X,\Sigma,\mu)$ is called \emph{totally non-free} if the collection
of sets $\fix(g),g\in G$ and sets of zero measure generates the sigma-algebra $\Sigma$.
\end{Def}
\begin{Def}\label{DefExtrNonfree} A measure preserving action of a group $G$ on a Lebesgue space
$(X,\Sigma,\mu)$ is called \emph{extremely non-free} if there exists $A\subset X,\mu(A)=1$ such that for each
$x,y\in A,x\neq y$ one has $\st_G(x)\neq \st_G(y)$.
\end{Def}
\noindent Vershik in \cite{V11} showed that for a countable group $G$ these two notions coincide:
\begin{Th}\label{ThTNFCond}
 For a countable group $G$ an action on a Lebesgue space is totally non-free if and only
if it is extremely non-free.\end{Th}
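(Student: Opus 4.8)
The plan is to reduce Theorem~\ref{ThTNFCond} to a statement purely about Lebesgue spaces and to carry out the translation at the level of definitions. Write $\mathcal F=\{\fix(g):g\in G\}$; since $g\in\st_G(x)\iff x\in\fix(g)$, two points $x,y$ satisfy $\st_G(x)=\st_G(y)$ exactly when they belong to the same members of $\mathcal F$. Hence ``extremely nonfree'' says precisely that there is a conull set $Y\subseteq X$ on which $\mathcal F$ \emph{separates points}, while ``totally nonfree'' says that $\mathcal F$ generates $\Sigma$ (modulo null sets, as in the definition recalled above). As $G$ is countable, $\mathcal F$ is a countable family, so Theorem~\ref{ThTNFCond} is exactly the claim: \emph{a countable family $\mathcal F$ of measurable subsets of a Lebesgue space $(X,\Sigma,\mu)$ generates $\Sigma$ if and only if $\mathcal F$ separates points of some conull subset of $X$.} Note that the group action plays no further role; only countability of $\mathcal F$ and measurability of the $\fix(g)$ are used.

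For the implication ``separates on a conull set $\Rightarrow$ generates $\Sigma$'', I would pass to the conull set $Y$, which with the induced measure is again a Lebesgue space. On $Y$ the countable family $\{A\cap Y:A\in\mathcal F\}$ separates points, so by Rokhlin's Theorem on Bases (see \cite{Rokh}; this is the result already invoked earlier in the paper) it generates $\Sigma|_Y$; since $Y$ is conull this means $\mathcal F$ generates $\Sigma$ modulo null sets. Equivalently, one can argue with the measurable map $\Phi\colon Y\to\{0,1\}^{\mathcal F}$, $x\mapsto(\mathbbm 1_A(x))_{A\in\mathcal F}$, which is injective on $Y$: by the Lusin--Souslin theorem an injective Borel map between standard Borel spaces has Borel image and Borel-measurable inverse, so $\Phi$ is a measure-space isomorphism of $Y$ onto its image, and pulling back the Borel $\sigma$-algebra of $\{0,1\}^{\mathcal F}$ recovers all of $\Sigma|_Y$.

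For the converse ``generates $\Sigma$ $\Rightarrow$ separates on a conull set'', fix a countable basis $\{B_k\}_{k}$ of $(X,\Sigma,\mu)$ (a countable family that separates points of $X$ and generates $\Sigma$), which exists in any Lebesgue space. Since $\mathcal F$ generates $\Sigma$ modulo null sets, each $B_k$ coincides, off a null set, with a set $B_k'$ lying in the genuine (uncompleted) $\sigma$-algebra $\sigma(\mathcal F)$. Put $Y=X\setminus\bigcup_k(B_k\triangle B_k')$, which is conull and satisfies $B_k\cap Y=B_k'\cap Y$ for every $k$. If $x,y\in Y$ were \emph{not} separated by $\mathcal F$, they would lie in the same atom of $\sigma(\mathcal F)$ --- the atoms of a countably generated $\sigma$-algebra being exactly the classes of points lying in the same generators --- so each $B_k'$, being a union of atoms, would contain $x$ iff it contains $y$, and then the same would hold for $B_k$ on $Y$; this contradicts the fact that $\{B_k\}$ separates $x$ from $y$. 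Hence $\mathcal F$ separates points of the conull set $Y$, i.e.\ the action is extremely nonfree.

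The hard part lies not in the group theory but in the structure theory of Lebesgue spaces: the only nontrivial input is that a countable separating family is a basis (Rokhlin's Theorem on Bases), or equivalently the Lusin--Souslin theorem on injective Borel maps. The remaining care is bookkeeping --- distinguishing the Borel $\sigma$-algebra $\sigma(\mathcal F)$ from its $\mu$-completion, and noting that measurable conull subsets of a Lebesgue space are themselves Lebesgue spaces --- but none of this is a genuine obstacle.
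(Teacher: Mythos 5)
Your proof is correct and takes essentially the same route as the paper: the forward direction is Rokhlin's Theorem on Bases applied to the conull set on which the sets $\fix(g)$ separate points, and the converse decomposes a countable basis modulo null sets into sets that are unions of the atoms of $\sigma(\{\fix(g)\})$ (i.e.\ of the stabilizer partition) and discards the countable union of exceptional null sets. The only cosmetic difference is that the paper phrases the converse directly in terms of the partition into atoms rather than the uncompleted $\sigma$-algebra, which amounts to the same thing.
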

\noindent Since some of the details of the proof of Theorem \ref{ThTNFCond} are only sketched in \cite{V11} here we provide a complete proof of Theorem \ref{ThTNFCond}.

In \cite{V11}, Vershik suggested the following:
\begin{Conj}\label{ClVer} For an ergodic action of a countable group $G$ on a Lebesgue space $(X,\mu)$ the associated character $$\chi(g)=\mu(\fix(g))$$ is indecomposable if and only if the action is totally non-free.
\end{Conj}
\noindent
However, as we show in the appendix, both directions of the above conjecture are false. In particular, the fact that the action is ergodic and totally non-free does not imply that the corresponding character is indecomposable. Here we show that indecomposability of the character follows from a stronger notion of non-freeness which we introduce in Section \ref{SecMain}.

 Extremely non-free actions are useful for constructing continuous ergodic \emph{invariant random subgroups}. Consider the space of subgroups $\Sub(G)$ of a group $G$ equipped with the topology generated by the sets of the form
$$U_{h_1,\ldots,h_k,g_1,\ldots,g_m}=\{H\in\Sub(G):h_i\in H\text{ for every }i,\;g_j\notin H\text{ for every }i\},$$ where $k,m\in\mathbb N$ and $h_1,\ldots,h_k,g_1,\ldots,g_m\in G$. The group $G$ acts on this space by conjugation:
$$H\in\Sub(G)\to g(H)=gHg^{-1}\in\Sub(G).$$ An invariant random subgroup (IRS) of $G$ is a $G$-invariant probability measure on $\Sub(G)$. An IRS is called continuous if it has no atoms.

Given an action of a group $G$ on a Lebesgue space $(X,\mu)$ consider the map \begin{equation*}\label{EqIRS}\Psi:X\to\Sub(G),\;\;\Psi(x)=\st_G(x).\end{equation*} If the action is extremely non-free then $\Psi$ is an isomorphism modulo sets of zero measure between $(X,\mu)$ and $(\Sub(G),\Psi_*\mu)$ (see \cite{V11}), where $\Psi_*\mu$ denotes the pushforward measure. If, in addition, the action is ergodic and measure-preserving, and $\mu$ has no atoms, then $\Psi_*\mu$ is a continuous ergodic IRS of $G$.

Using the above construction, in \cite{V12} Vershik obtained a complete list of ergodic IRS of the
infinite symmetric group. In \cite{Bowen} Bowen constructed a continuum of ergodic IRS for each
nonabelian free group. In the present paper for any branch group $G$ we obtain a countable collection
 of continuous ergodic IRS of $G$.
\section{Main results.}\label{SecMain}
To study characters and groupoid representations associated to actions of countable groups we introduce two new notions of non-freeness.
\begin{Def}\label{DefANF}
 An action of a group $G$ on a measure space $(X,\Sigma,\mu)$ is called absolutely non-free if
for every measurable set $A$ and every $\epsilon>0$ there exists $g\in G$ such that
$\mu(\fix(g)\Delta A)<\epsilon$.
\end{Def}
\noindent Instead of considering the sets $\fix(g)$ sometimes it is more convenient to work with their complements $\supp(g)$. For a measure-preserving automorphism $\phi$ of a Lebesgue space $(X,\Sigma,\mu)$ set $\supp(\phi)=\{x\in X:\phi(x)\neq x\}$. Notice that $\supp(\phi)$ is defined modulo $\mu$-zero measure and $\supp(\phi)=X\setminus\fix(\phi)$.
For a measure-preserving action of a group $G$ on a Lebesgue space $(X,\Sigma,\mu)$ and a measurable set $A\subset X$ introduce the subgroups $G_A<G$ of elements acting trivially outside $A$:
$$G_A=\{g\in G:\mu(\supp(g)\setminus A)=0\}.$$
\begin{Def}\label{DefPNF} Let a countable group $G$ act on a Lebesgue space $(X,\Sigma,\mu)$ by measure-preserving transformations. We will say that this action is \emph{perfectly non-free} if there exists a collection $\mathcal A$ of measurable subsets of $X$ such that $\mathcal A$ together with the sets of zero measure generates $\Sigma$ and for each $A\in\mathcal A$ the $G_A$-orbit $\{gx:g\in G_A\}\subset X$ is infinite for $\mu$-almost all $x\in A$.
\end{Def}
\noindent Notice that for an action of a countable group $G$ perfect non-freeness is a stronger condition than total non-freeness since each of the sets $A\in\mathcal A$ modulo zero measure can be obtained as a countable intersection of the sets of the form $\fix(g),g\in G$. We show that absolute non-freeness implies perfect non-freeness (see Theorem \ref{ThANFimpliesPNF}).

In Section \ref{SecNF} we prove the following.
\begin{Th}\label{ThIndecomp} Assume that the action of a countable group $G$ on a Lebesgue space $(X,\Sigma,\mu)$ is ergodic, measure-preserving and perfectly non-free. Let $\mathcal R$ be the corresponding equivalence relation on $X$ and $\pi$ be the associated groupoid representation. Then $\mathcal M_{\mathcal R}=\mathcal M_\pi$ and the corresponding character $\chi(g)=\mu(\fix(g)),g\in G,$ is indecomposable.
\end{Th}

Let $G$ act on a Lebesgue measure space $(X,\Sigma,\mu)$ by measure preserving transormations.
Consider the space $(X^n,\Sigma^n,\mu^n)$ (the n-th Cartesian power of $(X,\Sigma,\mu)$). Here $\Sigma^n$ is the sigma-algebra on $X^n$ generated by the sets of the form $A_1\times A_2\times\cdots\times A_n$, where $A_i\in \Sigma$.
Denote by $\diag_n$ the diagonal action of $G$ on $(X^n,\mu^n)$:
$$\diag_n(g)(x_1,\ldots,x_n)=(gx_1,\ldots,gx_n).$$ Notice that this action cannot be totally non-free, unless $\mu$
is concentrated in one point or $n=1$. Indeed,
$$\fix(\diag_n(g))=\fix(g)^n=\fix(g)\times\fix(g)\times\cdots\times\fix(g)\subset X^n$$ for all $g\in G$ and sets of this form cannot generate the sigma-algebra $\Sigma^n$ if $n>1$ and $\mu$ is not concentrated at one point.

Observe that for each $n\geqslant 1$ the action $\diag_n$ commutes with the action of the symmetric group $\sym(n)$ permuting factors of $X^n=X\times\cdots\times X$. Thus, this action
factors through the quotient $X_\sym^n:=X^n/\sym(n)$. The measure $\mu^n$ projects naturally
to an invariant measure $\mu^n_\sym$ on $X_\sym^n$. Denote by $\gamma_n:X^n\to X_\sym^n$ the quotient map
and by $\Sigma_n$ the sigma-algebra of measurable sets on $X_\sym^n$. Denote by $S_n(A):=\gamma_n^{-1}(\gamma_n(A))$ the
symmetrization of a subset $A\subset X^n$, that is the minimal subset of $X^n$ containing $A$ and invariant
under the action of $\sym(n)$.
\begin{Th}\label{ThANFimpliesPNF} If an action of a countable group $G$ on a Lebesgue space $(X,\Sigma,\mu)$ is absolutely non-free then for any $n$ the action $\diag_n$ of $G$ on $(X_\sym^n,\Sigma_\sym^n,\mu_\sym^n)$ is perfectly non-free.
\end{Th}
\noindent Notice that Theorem \ref{ThANFimpliesPNF} implies, in particular, that any absolutely non-free action of a countable group is also perfectly non-free.

  Assume that $G$ is a group acting on a regular rooted tree $T$, $X=\partial T$, $\mu$ is the uniform Bernoulli measure on $X$ and $\Sigma$ is the sigma-algebra of all measurable subsets of $X$. We prove the following:
\begin{Th}\label{ThDiagTNF} For any weakly branch group $G$ the action of $G$ on $(X,\Sigma,\mu)$ is absolutely non-free.
\end{Th}
  \noindent Recall that the action of a weakly branch group on the boundary of the corresponding rooted tree 
is ergodic with respect to the measure $\mu$. It is not hard to see that for $n\geqslant 2$ the action $\diag_n$ of $G$  on $(X_\sym^n,\Sigma_\sym^n,\mu_\sym^n)$ is not ergodic. Nevertheless, the following holds:
\begin{Prop}\label{ThDiagErg} For any countable branch group $G$ the action
 $\diag_n$ of $G$ on $(X_\sym^n,\Sigma_\sym^n,\mu_\sym^n)$ splits
 into countably many ergodic components of positive measures.
\end{Prop}
Further, let $G$ be a countable branch group. Denote by $\mathcal{E}_n$ the set of ergodic components of the action $\diag_n$ of $G$ on
$X^n_\sym$. For $\alpha\in\mathcal{E}_n$ let $X_\alpha$ be the corresponding subset of $X^n_\sym$. Denote by $\Sigma_\alpha$ the sigma-algebra of measurable subsets of $X_\alpha$ and by $\mu_\alpha$ the normalized restriction
of $\mu^n_\sym$ onto $X_\alpha$ (so that $\mu_\alpha(X_\alpha)=1$). Let $\diag_\alpha$ be the restriction of $\diag_n$ onto $X_\alpha$. Since by Theorem \ref{ThDiagTNF} the sets $\fix(\diag_n(g)),g\in G,$ generate the sigma-algebra $\Sigma_\sym^n$, the sets $\fix(\diag_\alpha(g)),g\in G$ for each $\alpha$ generate the sigma-algebra $\Sigma_\alpha$. Thus, the action $\diag_\alpha$ of $G$ on $(X_\alpha,\Sigma_\alpha,\mu_\alpha)$ is totally non-free. Introduce a function $\chi_\alpha$ on $G$ by
$$\chi_\alpha(g)=\mu_\alpha(\fix(\diag_n(g))\cap X_\alpha).$$ Let $\pi_\alpha$ be the groupoid representation of $G$ corresponding to the action $\diag_\alpha$. Using the map $\Psi_\alpha:X_\alpha\to \Sub(G)$ by $\Psi_\alpha(x)=\st_G(x)$ introduce an IRS $\phi_\alpha=(\Psi_\alpha)_*\mu$ on $G$.
Set $\mathcal E=\bigcup\limits_{n\in\mathbb N}\mathcal{E}_n$. From Theorems \ref{ThIndecomp}--\ref{ThDiagTNF} we obtain:
\begin{Co}\label{CoWBIndecomp}
The characters $\chi_\alpha$, $\alpha\in \mathcal{E}$, are indecomposable.
\end{Co}
\noindent We show
\begin{Prop}\label{ThDistChar}
The characters $\chi_\alpha$, $\alpha\in \mathcal{E}$, are pairwise distinct.
\end{Prop}
\noindent From Corollary \ref{CoWBIndecomp} and Proposition \ref{ThDistChar} we immediately obtain the following:
\begin{Th} Every branch group has infinitely many
pairwise non-isomorphic totally non-free actions $\diag_\alpha$, infinitely many indecomposable characters $\chi_\alpha$, infinitely many pairwise non-isomorphic finite type factor representations $\pi_\alpha$ and infinitely many continuous ergodic invariant random subgroups $\phi_\alpha$.
\end{Th}
\section{Non-free actions and the corresponding characters.}\label{SecNF}

\noindent As some details of proof of Theorem \ref{ThTNFCond} are only sketched in \cite{V11} we give here a proof of these Theorem in full details.
\begin{proof}[\textbf{Proof of Theorem \ref{ThTNFCond}}] Assume that an action of $G$ on $(X,\Sigma,\mu)$ is extremely non-free. Let $A$ be the set from the definition  of the extremely non-free action. Let $x,y\in A,x\neq y$. Then there exists $g\in G$ for which
$x\in \fix(g),y\notin \fix(g)$. Thus the collection of sets $\fix(g),g\in G$ separates points from $A$. By Theorem on bases from
\cite{Rokh} (see page 22) the collection of sets $\fix(g),g\in G$, is a basis for the Lebesgue space $(A,\Sigma\cap A,\mu|_A)$, that
is it generates the sigma-algebra of all measurable sets on $A$, and so on $X$ as well. Here $\Sigma\cap A=\{B\in\Sigma:B\subset A\}$ and $\mu|_A$ is the restriction of the measure $\mu$ on the subset $A$.

Further, assume that the action of $G$ on $(X,\Sigma,\mu)$ is totally non-free. Denote by $\Xi$
the partition on $X$ generated by the sets $\fix(g),g\in G$. The atoms of this partition are of the form
 $$\bigcap\limits_{g\in G}A_g,\;\;\text{where}\;\;A_g=\fix(g)\;\;\text{or}\;\;A_g=X\setminus \fix(g)
 \;\;\text{for each}\;\; g\in G.$$ In other words, $x,y$ belong to the same atom $\xi$ of the partition $\Xi$
if and only if $\st_G(x)=\st_G(y)$. Since $\fix(g),g\in G,$ generate the sigma-algebra of measurable sets
on $(X,\Sigma,\mu)$, each measurable set $B$ can be represented as $B_\Xi\cup B_0$, where $B_\Xi$ is a union of some atoms of the partition $\Xi$ and $\mu(B_0)=0$. Let  $B_i,i\in\mathbb{N},$ be a basis of measurable sets
on $(X,\Sigma,\mu)$. Let $B_i=B_{i,\Xi}\cup B_{i,0}$ be the corresponding subdivisions. Set $K=\cup_i B_{i,0}$.
By the definition of the basis, the sets $B_i$ separate
points of $X$. In particular, for each $x\neq y\in X\setminus K$ there exists $B_j$ such that $x\in B_j,y\notin B_j$. Then $x\in B_{j,\Xi},y\notin B_{j,\Xi}$. It follows that $x,y$ are not in the same atom of $\Xi$ and $\st_G(x)\neq\st_G(y)$, which finishes the proof.
\end{proof}\noindent
The following fact is folklore:
 \begin{Lm}\label{LmFixed} Let $\kappa$ be a unitary representation of a discrete group $\Gamma$ on a Hilbert space $H$. Set $H_1=\{\eta\in H:\kappa(g)\eta=\eta\;\;\text{for all}\;\;g\in \Gamma\}$. Then the orthogonal projection $P$ onto $H_1$ belongs to $\mathcal M_\kappa$.
 \end{Lm}
 \begin{proof} Let $B\in\mathcal M_\kappa'$. Then $$\kappa(g)B\eta=B\kappa(g)\eta=B\eta$$ for every $\eta\in H_1,g\in \Gamma$. This implies that $BH_1\subset H_1$ and so $BP=PBP$. Same argument shows that $B^{*}P=PB^{*}P$, where $^{*}$ stands for the operation of conjugation in $B(H)$. Conjugating the latter identity we obtain that $PB=PBP=BP$. By von Neumann Bicommutant Theorem (see \eg Theorem 5.3.2 in \cite{KR1}) we get that $P\in \mathcal (M_\kappa')'=\mathcal M_\kappa$.
 \end{proof}
\begin{proof}[\textbf{Proof of Theorem \ref{ThIndecomp}}]
Let $A\in\mathcal A$. Consider the space
$$\mathcal L_{A}=\{\eta\in L^2(\mathcal R,\nu):\nu(\supp(\eta)\setminus (A\times X))=0\}.$$ Let us show that $\mathcal L_{A}$ coincides with the subspace of $L^2(\mathcal R,\nu)$ consisting of $\pi(G_{X\setminus A})$-invariant vectors. Clearly, $\pi(G_{X\setminus A})$ acts trivially on $\mathcal L_{A}$. It remains to show that there are no non-zero $\pi(G_{X\setminus A})$-invariant vectors in $\mathcal L_{X\setminus A}$. Assume that there exists such non-zero vector $\eta$. By the definition of perfect non-freeness  for $\mu$-almost every $x\in X\setminus A$ the orbit $G_{X\setminus A}x$ is infinite. It follows that for $\nu$-almost all $(x,y)\in \supp(\eta)$ there exists infinitely many $z$ such that $\eta(z,y)=\eta(x,y)$. By the definition of the groupoid construction we obtain a contradiction ($\eta$ has an infinite norm).

Thus, from Lemma \ref{LmFixed} we obtain that the orthogonal projection onto the subspace
$\mathcal L_{A}$ belongs to $\mathcal M_\pi$. Notice that this projection coincides with the operator of multiplication by the characteristic function  $\mathbbm{1}_{A}(x)$ of $A$ in $x$-coordinate. Since the sets from $\mathcal A$ generate $\Sigma$ we obtain that $\mathcal M_\pi\supset L^\infty(X,\mu)_l$ and so $\mathcal M_\pi=\mathcal M_{\mathcal R}$. Since the action of $G$ on $(X,\mu)$ is ergodic the algebra $\mathcal M_{\mathcal R}$ is a factor. Since $\pi$ is isomorphic to the GNS-construction associated to $\chi$ and factor representations are in a bijection with indecomposable characters via GNS-construction (see Subsection \ref{SubsecReps}), we obtain that $\chi$ is indecomposable.
\end{proof}

\begin{Lm}\label{LmANFLongOrb} Assume that an action of a countable group $G$ on a Lebesgue space $(X,\Sigma,\mu)$ is absolutely non-free. Then for any $n\in \mathbb N$ for any measurable set $A\subset X$ and any $\epsilon>0$ there exists measurable sets $B\supset A\supset C$ such that \begin{equation}\label{EqBAC}\mu(B\setminus C)<\epsilon\;\;\text{and}\;\;|\{gx:g\in G_B\}|\geqslant n\;\;\text{for}\;\;\mu{-a.e.}\;\;x\in C.\end{equation}
\end{Lm}
\begin{proof} We prove the lemma by induction on $n$. For $n=1$ the statement follows from the definition of absolute non-freeness. Assume that the statement is true for  $n=m$. Let $A\subset X$ be a measurable set. In the case when $\mu(A)=0$ the lemma is trivial. Let $\mu(A)>0$. Fix $\epsilon>0$. By the definition of absolute non-freeness there exists $g\in G$ such that $\mu(\supp(g)\setminus A)<\epsilon/8$. Construct disjoint measurable subsets $A_1,\ldots, A_k\subset \supp(g)$ such that $g(A_i)\cap A_i=\varnothing$ for $1\leqslant i\leqslant k$ and
$$\mu(A_1\cup A_2\cup\cdots\cup A_k)>\mu(\supp(g))-\epsilon/8.$$
By the inductive assumption for each $1\leqslant i\leqslant k$ there exists $B_i\supset A_i\supset C_i$ such that
$$\mu(B_i\setminus C_i)<\frac{\epsilon}{8k}\;\;\text{and}\;\;|\{hx:h\in G_{B_i}\}|\geqslant m\;\;\text{for}\;\;\mu\text{-a.e.}\;\;x\in C_i.$$ Set $D_i=C_i\setminus g^{-1}(B_i\setminus A_i)$. Notice that $$\mu(B_i\setminus D_i)<\frac{\epsilon}{4k}\;\;\text{and}\;\;
g(D_i)\cap B_i=\varnothing.$$ Set $C=(\bigcup\limits_{i=1}^k D_i)\cap A, B=(\bigcup\limits_{i=1}^k B_i)\cup A\cup \supp(g)$. It is straightforward to verify that \eqref{EqBAC} is satisfied with these sets for $n=m+1$.
\end{proof}

\begin{proof}[\textbf{Proof of Theorem \ref{ThANFimpliesPNF}}] Let an action of a countable group $G$ on a Lebesgue space $(X,\Sigma,\mu)$ be absolutely non-free.
Let $A\subset X$ be a measurable subset. Fix $\epsilon>0$.
Using Lemma \ref{LmANFLongOrb} by induction we construct a sequence of triples of sets
$B_i\supset A_i\supset C_i,i\geqslant 0$, such that $A_0=A$ and for every $i\geqslant 0$ \begin{align*}
\;\;\mu(B_i\setminus C_i)<2^{-i-1}\epsilon,\;\;A_{i+1}=B_i,
\\ |\{gx:g\in G_{B_i}\}|\geqslant i\;\;\text{for}\;\;\mu\text{-a.e.}\;\;x\in C_i.\end{align*} Set $A_\epsilon=\bigcup_{i=0}^\infty A_i$. Then
\begin{equation}\label{EqAeps}\mu(A_\epsilon\setminus A)<\epsilon\;\;\text{and}\;\;\{gx:g \in G_{A_\epsilon}\}\;\;\text{is infinite for}\;\;\mu\text{-a.e.}\;\;x\in A_\epsilon.\end{equation} Notice that for any measurable set $B$ one has $G_{\alpha_n(B)}=G_B$, where $$\alpha_n(B)=X_\sym^n\setminus \gamma_n(X\setminus B)=\{\gamma_n(x):x\in X^n,x_i\in B\;\text{for at least one}\;i\}$$ and $G_{\alpha_n(B)}<G$ is the subgroup of elements $g\in G$ such that $\diag_n(g)$ is supported on $\alpha_n(B)$. It follows that
$$\{\diag_n(g)\bar x:g \in G_{\alpha_n(A_\epsilon)}\}\;\;\text{is infinite for}\;\;\mu_\sym^n\text{-a.e.}\;\;\bar x\in \alpha_n(A_\epsilon).$$

 Further, take a countable collection $\mathcal A_0$ of measurable subsets of $X$ which separates $n$-element subsets of $X$, \ie\, such that for any $\bar x\neq\bar y\subset X$ with $|\bar x|=|\bar y|=n$ there exists $A\in \mathcal A_0$ with $$\bar x\subset A,\bar y\not\subset A\;\;\text{or}\;\;\bar y\subset A,\bar x\not\subset A.$$  Then the sets $\alpha_n(A)$, $A\in \mathcal A_0$, separate $\mu_\sym^n$-almost all points of $X_\sym^n$. For each $A\in \mathcal A_0$ construct a set $A_{1/n}$ as in \eqref{EqAeps}. Then the sets
 $\alpha_n(A_{1/n})$, where $A\in\mathcal A_0$ and $n\in\mathbb N$, also separate $\mu_\sym^n$-almost all points of $X_\sym^n$. Theorem on Bases (see \cite{Rokh}) implies that this collection of sets together with sets of $\mu_\sym^n$-zero measure generates $\Sigma_\sym^n$ which shows that the action $\diag_n$ is perfectly non-free.
\end{proof}


\section{Absolute non-freeness of actions of weakly branch groups.}\label{SecANF}
In this section we obtain Theorem \ref{ThDiagTNF} from a more general Proposition \ref{PropBrAbsNonfree}. The proof is based on several statements.
First we prove a combinatorial lemma.
\begin{Lm}\label{LmOnSubs} Let $n\in\mathbb{N}$ and $H<\sym(n)$ be a subgroup
acting transitively on $\{1,2,\ldots,n\}$. Let $A$ be a subset
 of $\{1,\ldots,n\}$ such that for all $g\neq h,$ $g,h\in H$ one has
 $|g(A)\Delta h(A)|\leqslant |A|$, where $|A|$ is the cardinality of $A$. Then $|A|>n/2$.
\end{Lm}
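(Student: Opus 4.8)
The plan is to first reformulate the hypothesis in a more symmetric, translation-invariant form. Applying the bijection $h^{-1}$ to the set $g(A)\Delta h(A)$ gives $|g(A)\Delta h(A)|=|(h^{-1}g)(A)\Delta A|$, so the condition ``$|g(A)\Delta h(A)|\leqslant |A|$ for all $g\neq h$ in $H$'' is equivalent to ``$|g(A)\Delta A|\leqslant |A|$ for all $g\in H\setminus\{e\}$''. Since each element of $H$ is a permutation we have $|g(A)|=|A|$, hence $|g(A)\Delta A|=2|A|-2|g(A)\cap A|$; thus the hypothesis is equivalent to the inequality $|g(A)\cap A|\geqslant |A|/2$ holding for every $g\in H$ (the case $g=e$ being trivial). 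Note also that the hypothesis is vacuous when $A=\varnothing$, so we may and do assume $A\neq\varnothing$, i.e. $|A|\geqslant 1$.

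Next I would estimate the sum $S=\sum_{g\in H}|g(A)\cap A|$ in two ways. On one hand, writing $|g(A)\cap A|=\#\{(i,j)\in A\times A:\ g(i)=j\}$ and summing over $g\in H$, transitivity of $H$ together with the orbit--stabilizer theorem gives $\#\{g\in H:\ g(i)=j\}=|H|/n$ for every pair $(i,j)$, whence $S=|A|^2\cdot|H|/n$. On the other hand, isolating the term $g=e$ (which equals $|A|$) and applying the reformulated hypothesis to the remaining $|H|-1$ terms yields $S\geqslant |A|+(|H|-1)\cdot|A|/2$.

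Combining the two estimates gives $|A|^2|H|/n\geqslant |A|\cdot(|H|+1)/2$; dividing by $|A|>0$ produces
$$|A|\ \geqslant\ \frac{n(|H|+1)}{2|H|}\ =\ \frac n2\Bigl(1+\frac1{|H|}\Bigr)\ >\ \frac n2,$$
as desired. The argument is essentially a single averaging computation, so there is no serious obstacle; the only points requiring a little care are spotting the translation-invariance reformulation that turns the symmetric-difference bound into the intersection bound $|g(A)\cap A|\geqslant |A|/2$, and handling the degenerate case $A=\varnothing$ (for which the statement as literally written needs $A$ to be nonempty, as is the case in the intended application).
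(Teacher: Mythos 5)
Your proof is correct and is essentially the same averaging argument as the paper's: the paper sums the indicator vectors $\xi_h$ of the sets $h(A)$ and computes $\|\sum_h\xi_h\|^2=\sum_{g,h}|g(A)\cap h(A)|$ exactly by transitivity while bounding it below by the hypothesis, which is the same double count as your $S=\sum_{g}|g(A)\cap A|$ (up to a factor $|H|$, after your translation-invariance reduction), and both arrive at the identical bound $|A|\geqslant \tfrac{n}{2}\bigl(1+\tfrac{1}{|H|}\bigr)$. Your explicit handling of the degenerate case $A=\varnothing$ (which the paper's final division by $k$ also silently requires, and which is guaranteed in the application since $K>0$) is a welcome touch.
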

\begin{proof} Let $k=|A|$. Then for all $h\neq g\in H$ one has
 $$|h(A)\cap g(A)|=\frac{1}{2}(|h(A)|+|g(A)|-|h(A)\Delta g(A)|)\geqslant k/2.$$
 For each $h\in H$ introduce a vector $\xi_h\in \mathbb{C}^n$ by the rule:
 $$\xi_h=(x_1,\ldots,x_n),\;\;\text{where}\;\;x_i=\left\{\begin{array}{ll}
 0,&\text{if}\;\;i\notin h(A),\\1,&\text{if}\;\;i\in h(A).
 \end{array}\right.$$ Let $(\cdot,\cdot)$ be the standard scalar product in $\mathbb C^n$ and $\|\cdot\|$ be the corresponding norm. Then $\|\xi_h\|^2=k$ and $(\xi_h,\xi_g)=|h(A)\cap g(A)|\geqslant k/2$ for all $g\neq h\in H$.
 It follows that
 $$\|\sum\limits_{h\in H}\xi_h\|^2\geqslant \tfrac{k}{2}m(m+1),\;\;\text{where}\;\;m=|H|.$$
 On the other hand, the group $H$ acts on $\mathbb{C}^n$ by permuting coordinates and  $g(\xi_h)=\xi_{gh}$ for all $g,h\in H$. Since $H$ acts transitively on $\{1,\ldots,n\}$ and the vector
  $\sum\limits_{h\in H}\xi_h$ is fixed by $H$, we obtain:
  $$\sum\limits_{h\in H}\xi_h=(\tfrac{km}{n},\tfrac{km}{n},\ldots,\tfrac{km}{n}),\;\;
  \|\sum\limits_{h\in H}\xi_h\|^2=\tfrac{k^2m^2}{n}.$$ It follows that $km\geqslant n\tfrac{m+1}{2}$ and hence
  $k>\tfrac{n}{2}$.
\end{proof}

 \noindent We also need the following:
 \begin{Lm}\label{LmWeakBrProp} Let $G$ be a weakly branch group acting on a $d$-regular rooted tree $T$. Then for every vertex $v$ there exists $g\in G$ such that $\supp(g)\subset X_v$ and $\mu(\supp(g))\geqslant \tfrac{1}{d}\mu(X_v)$.
 \end{Lm}
 \begin{proof} Denote by $e$ the identity element of $G$. For an element $h\in G$ let $l(h)=\max\{l:h\in\st_G(l)\}.$
 Fix a vertex $v$ of the tree. Since $G$ is weakly branch there exists $g\neq  e$ with
 $\supp(g)\subset X_v$. Denote
 $$L=\min\{l(g):g\in G,g\neq  e,\supp(g)\subset X_v\}.$$
 For $h\in\aut(T)$ denote by $\sigma_h$ the permutation induced by $h$ on the set $V_L$ of vertices of level $L$ in $T$.  For a vertex $v\in V_n,n\in\mathbb N$, and $l\geqslant n$ set $V_l(v)=T_v\cap V_l$. For $g\in G$ denote by $W(g)$ the set of vertices $w$ from
 $V_L(v)$ such that $g$ induces a nontrivial permutation on $V_{L+1}(w)$. Let
 $$k(g)=|W(g)|,\;\;K=\max\{k(g):g\in G,\supp(g)\subset X_v\}.$$ By the choice of $L$ we have that $K>0$. Fix an element $g\in G$ with $\supp(g)\subset X_v$
 such that $k(g)=K$.

Further, since $G$ acts transitively on $V_L$, we can find a number $m$
 and a collection of elements $H=\{h_1,h_2,\ldots,h_m\}\subset G$ such that the family $S=
 \{\sigma_{h_0},\sigma_{h_1},\ldots,\sigma_{h_m}\}$ of transformations of $V_L$
 forms a group preserving $V_L(v)$ and transitive on
 $V_L(v)$. Denote $g_i=h_igh_i^{-1}$. One has:
 $$W(g_i)=\sigma_{h_i}(W(g)),\;\;W(g_ig_j)\supset
 W(g_i)\Delta W(g_j)$$ for all $i,j$. It follows that the set $W(g)$ together with the group $S$ restricted
 to $V_L(v)$ satisfy the conditions of Lemma \ref{LmOnSubs}.
 Therefore, $K=|W(g)|> \tfrac{1}{2}|V_L(v)|$. For each $w\in W(g)\subset V_L(v)$ the element $g$
 induces a nontrivial permutation of $V_{L+1}(w)$, and thus
 $\supp(g)\supset X_{w_1}\cup X_{w_2}$ for some vertices $w_1,w_2\in V_{L+1}(w)$. It follows that $$\mu(\supp(g)\cap X_w)\geqslant \tfrac{2}{d}\mu(X_w)$$ for every $w\in W(g)$. This implies that $\mu(\supp(g))>
 \tfrac{1}{d}\mu(X_v)$.
\end{proof}

\begin{Prop}\label{PropBrAbsNonfree} Let $G$ be a weakly branch group, $T$
be the corresponding regular rooted tree, $X=\partial T$ and $\mu$ be the unique invariant measure on $X$.
Then for any clopen subset $A\subset X$ and any $\epsilon>0$
 there exists $g\in G$ such that $$\supp(g)\subset A\;\;\text{and}\;\;
 \mu(A\setminus \supp(g))<\epsilon.$$
 In particular, the action of $G$ on $(\partial T,\Sigma,\mu)$ is absolutely non-free.\end{Prop}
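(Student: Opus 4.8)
The plan is to first reduce to the case where $A = X_v$ is a single cylinder set (a basic clopen set corresponding to a vertex $v$ of the tree), and then handle a general clopen $A$ by writing it as a disjoint union of cylinders over vertices at a common high level $n$. For a single cylinder $X_v$ with $v \in V_n$, I want to find $g \in \rist_G(v)$ whose support is contained in $X_v$ (this containment is automatic for elements of the rigid stabilizer) and whose support fills up $X_v$ up to measure $\epsilon$. The key structural input is that $G$ is weakly branch, so $\rist_G(v)$ is nontrivial; moreover, since $G$ acts level-transitively, $\rist_G(v)$ acts level-transitively on the subtree $T_v$ — more precisely, for any $k$ the group $\rist_G(v)$ has elements moving points deep inside $T_v$, and one can iterate/combine to cover more and more of $X_v$.

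The main step is the covering estimate. First I would observe that any nontrivial $g_0 \in \rist_G(v)$ has $\supp(g_0) \subset X_v$ with $\mu(\supp(g_0)) > 0$, since $\supp(g_0)$ is a nonempty clopen set. Then, using level transitivity of $G$ on $T$ (hence of $\rist_G(v)$-conjugates, or directly of the rigid stabilizers of the descendants of $v$), I would cover $X_v$ by finitely many translates: choose a level $m > n$ large enough that the clopen set $\supp(g_0)$ contains some cylinder $X_w$, $w \in V_m$ with $w$ a descendant of $v$; then for each descendant $w'$ of $v$ at level $m$ pick $h_{w'} \in G$ (in fact in $\rist_G(v)$, using that $G$ acts transitively on $V_m$ and an averaging/conjugation argument confined to $T_v$) with $h_{w'} X_w = X_{w'}$, so that $g_{w'} := h_{w'} g_0 h_{w'}^{-1} \in \rist_G(v)$ moves every point of $X_{w'}$. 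The product (or a suitable element of the group generated by) the $g_{w'}$ then has support covering all of $X_v$ up to a controlled error; by passing to a high enough level $m$ and being slightly more careful — multiplying finitely many such conjugates whose supports jointly cover all but an $\epsilon$-fraction of $X_v$, and checking the product still moves a point wherever at least one factor does and the others fix it — one gets $g \in \rist_G(v) \le G$ with $\supp(g) \subset X_v$ and $\mu(X_v \setminus \supp(g)) < \epsilon$.

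For general clopen $A$, write $A = \bigsqcup_{i=1}^k X_{v_i}$ with all $v_i$ at a common level $n$ (possible since $A$ is clopen in the Cantor set $\partial T$), apply the cylinder case to each $X_{v_i}$ with error tolerance $\epsilon/k$ to get $g_i \in \rist_G(v_i)$, and set $g = g_1 g_2 \cdots g_k$. Since the $v_i$ are incomparable the supports $\supp(g_i) \subset X_{v_i}$ are pairwise disjoint and the $g_i$ commute, so $\supp(g) = \bigsqcup_i \supp(g_i) \subset A$ and $\mu(A \setminus \supp(g)) = \sum_i \mu(X_{v_i} \setminus \supp(g_i)) < \epsilon$. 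The final sentence of the Proposition is then immediate: given any measurable $A$ and $\epsilon > 0$, approximate $A$ by a clopen set $A'$ with $\mu(A \Delta A') < \epsilon/2$ (clopen sets are dense in $\Sigma$ for the Bernoulli measure on $\partial T$), then find $g$ with $\supp(g) \subset A'$ and $\mu(A' \setminus \supp(g)) < \epsilon/2$, giving $\mu(\fix(g) \Delta (X \setminus A)) = \mu(\supp(g)\Delta A) < \epsilon$, i.e. absolute non-freeness (after replacing $A$ by its complement, which is harmless in the definition).

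The step I expect to be the main obstacle is the covering estimate inside a single subtree $T_v$: one needs to leverage weak branchness (nontriviality of rigid stabilizers) together with level transitivity of $G$ to produce, for every $\epsilon$, an element of $\rist_G(v)$ whose support exhausts $X_v$ up to $\epsilon$. The delicate point is that level transitivity is a property of $G$, not a priori of $\rist_G(v)$, so moving the "seed" support $\supp(g_0)$ around to tile $X_v$ requires conjugating by elements of $G$ and then checking the conjugates land back in $\rist_G(v)$ — which they do, because $\rist_G(v)$ is normalized by any element of $G$ fixing $v$, and $G$ acts transitively on the vertices below $v$ at each level so such fixing elements suffice to permute the sub-cylinders of $X_v$ transitively. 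Once that bookkeeping is set up, the measure estimate is a routine geometric-series-type bound.
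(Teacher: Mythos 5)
There is a genuine gap, and it sits exactly at the step you flagged as the main obstacle. Your plan is to conjugate a seed element $g_0\in\rist_G(v)$ (with $X_w\subset\supp(g_0)$) by elements $h_{w'}$ carrying $X_w$ to each sub-cylinder $X_{w'}$, and then multiply the conjugates $g_{w'}=h_{w'}g_0h_{w'}^{-1}$. But $\supp(g_{w'})=h_{w'}(\supp(g_0))$ is a translate of the \emph{whole} (possibly widely spread) open set $\supp(g_0)$, not just of $X_w$, so the supports of the different $g_{w'}$ overlap in general, and on the overlaps the product can cancel; your proviso ``the product still moves a point wherever at least one factor does and the others fix it'' is precisely the hypothesis that fails there. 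A priori the product could even be trivial. Nor does passing to ``a suitable element of the group generated by the $g_{w'}$'' help without further argument: the union of the supports covers $X_v$, but nothing you have written rules out that every \emph{single} element of that group has support of tiny measure. Without a uniform lower bound of the form $\mu(\supp(g))\geqslant c\,\mu(X_{v})$ valid for every cylinder, the concluding ``geometric-series-type bound'' has no engine and the iteration need not drive the uncovered measure to $0$.

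This missing uniform bound is the actual content of the paper's proof, and it is obtained by a mechanism absent from your proposal. The paper takes $L$ to be the minimal level at which a nontrivial element supported in $X_v$ acts, considers for each such $g$ the set $W(g)\subset V_L(v)$ of level-$L$ vertices where $g$ permutes the children nontrivially, and chooses $g$ \emph{maximizing} $|W(g)|$. The point is the inclusion $W(g_ig_j)\supset W(g_i)\Delta W(g_j)$ for conjugates $g_i$ of $g$: combined with maximality this yields $|W(g_i)\Delta W(g_j)|\leqslant |W(g)|$, and a separate combinatorial lemma (Lemma \ref{LmOnSubs}) about transitive permutation groups then forces $|W(g)|>\tfrac12|V_L(v)|$, hence $\mu(\supp(g))\geqslant\tfrac1d\mu(X_v)$. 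This maximality-plus-counting argument is exactly what defeats the cancellation problem, and it is the idea your proof needs and does not contain. The remaining parts of your outline (reduction to cylinders, disjointly supported elements over incomparable vertices multiplying cleanly, approximation of measurable sets by clopen ones) are fine and match the paper's concluding iteration.
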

\begin{proof} Let $A\subset X$ be any clopen set and let $g_0= e$.
Construct by induction elements $g_n\in G,n=0,1,2\ldots$ such that
$\supp(g_n)\subset A$ and $\mu(A\setminus\supp(g_n))\leqslant \big(\tfrac{d}{d+1}\big)^n$.
 If $g_n$ is constructed choose vertices $v_1,\ldots v_k$ such that $X_{v_j}$ are disjoint subsets
 of $A\setminus\supp(g_n)$ and $$\sum\mu(X_{v_j})\geqslant \tfrac{d}{d+1}\mu(A\setminus\supp(g_n)).$$
  Using Lemma \ref{LmWeakBrProp} construct  elements $h_1,\ldots h_k$ such that
  $\supp(h_j)\subset X_{v_j}$ and $$\mu(\supp(h_j))\geqslant\tfrac{1}{d}\mu(X_{v_j}).$$
 Set $g_{n+1}=g_nh_1h_2\ldots h_k$. Then $\mu(A\setminus \supp(g_{n+1}))
 \leqslant \tfrac{d}{d+1}\mu(A\setminus \supp(g_n))$, which finishes the proof.
\end{proof}

\section{Ergodic components of the diagonal actions.}\label{SecErg}
In this section we prove {\bf Proposition \ref{ThDiagErg}}.

Let $G$ be a branch group acting on a regular rooted tree $T$, $X=\partial T$  and $\mu$ be the unique invariant probability measure on $X$. Fix $n\in\mathbb N$. First observe that since $(X_\sym^n,\Sigma_\sym^n,\mu_\sym^n)$ is a finite quotient of $(X^n,\Sigma^n,\mu^n)$ it is sufficient to show that the action $\diag_n$ of $G$ on $(X^n,\Sigma^n,\mu^n)$ admits at most countably many ergodic components. For a level $k\geqslant \log_d n$ and an n-tuple $\mathrm v=(v_1,\ldots,v_n)$ of distinct vertices from $V_k$ set
$$X_{\mathrm v}=X_{v_1}\times X_{v_2}\times\cdots\times X_{v_n}\subset X^n.$$
Observe that union of the subsets of the form $S_n(X_{\mathrm v})$ of $X^n$ for all levels $k$ and all n-tuples  $\mathrm v$ of $n$ distinct vertices from $V_k$ is of the full measure in $X^n$. Introduce stabilizer of $X_\mathrm v$ in $G$ by: $$G_{\mathrm v}=\{g\in G:\diag_n(g)X_\mathrm v=X_\mathrm v\}.$$ Clearly, to prove Proposition \ref{ThDiagErg} it is sufficient to show for each $k$ and each $\mathrm v$ that for the action of $G_\mathrm v$
 the space $X_\mathrm v$ splits into finitely many ergodic components.

 \begin{Lm}\label{LmRistErg} For each vertex $u$ of $T$ the action of $\rist_G(u)$ on $X_u$ has finitely many ergodic components.
 \end{Lm}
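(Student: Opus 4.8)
The plan is to reduce the statement to a finite computation involving the quotient of $\rist_G(u)$ by the level stabilizers, exploiting that $G$ is branch. First I would observe that the restriction of $\rist_G(u)$ to $X_u$ is again a group acting on the rooted subtree $T_u$ (which is isomorphic to $T$), and that this action is the one induced by the ``section'' of $\rist_G(u)$ at $u$. Since $G$ is branch, $\rist_G(u)$ is nontrivial, and in fact for any descendant $w$ of $u$ the group $\rist_G(w)$ is also nontrivial; so the restricted group acts level-transitively below some bounded depth. The key point is that an ergodic component of a measure-preserving action on $(X_u,\mu|_{X_u})$ is a union of cylinder sets modulo measure zero precisely when the action fails to mix cylinders, and for a group acting on a rooted tree this failure is governed entirely by how the group permutes the vertices at each finite level.

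The main step is to show that the action of $\rist_G(u)$ on $X_u$ becomes level-transitive from some level on, i.e. there exists $m$ (depending on $u$) such that $\rist_G(u)$ acts transitively on $V_m(u)$, the vertices of $T_u$ at distance $m$ below $u$ — or, if not on a single level, at least that the orbits on $V_l(u)$ stabilize in number as $l\to\infty$. Here I would use the branch property: for each vertex $w$ below $u$, $\rist_G(w)\le\rist_G(u)$ is nontrivial and contained in the pointwise stabilizer of everything outside $T_w$, so it can only move vertices inside $T_w$; combining such elements over a maximal antichain of vertices at a fixed level and using transitivity of $G$ on levels (pushed down into $\rist_G(u)$ via conjugation by elements of $G$ that stabilize $u$, analogously to the argument in the proof of the preceding lemma) shows the orbit count on $V_l(u)$ is eventually constant, say equal to $N$. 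Then each of the $N$ ``eventual orbits'' is a clopen-modulo-null $\rist_G(u)$-invariant subset of $X_u$, and I claim these are exactly the ergodic components: on each such piece the action is level-transitive on the relevant subtree, hence ergodic by Proposition 6.5 of \cite{GNS00} exactly as invoked for weakly branch groups in the Preliminaries.

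The hard part will be the second half of the previous paragraph — establishing that level-transitivity on the eventual-orbit piece really holds and that the tail orbit-equivalence relation is measurably the same as the partition into these $N$ pieces, so that ergodicity on each piece follows. One has to rule out the possibility that, although the number of orbits on $V_l(u)$ stabilizes, the orbits themselves keep ``refining'' in a way that produces infinitely many ergodic components; the branch hypothesis (finite index of $\rist_G(l)$ in $G$ for every $l$, hence a uniform lower bound on how much of each subtree the rigid stabilizers move) is precisely what prevents this, and making that quantitative is the crux. Once level-transitivity on each piece is in hand, ergodicity is immediate and the number of components is the finite number $N$, completing the proof.
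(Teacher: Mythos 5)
There is a genuine gap, and you have located it yourself: the entire content of the lemma is concentrated in the step you defer with ``making that quantitative is the crux.'' Showing that the orbit count of $\rist_G(u)$ on $V_l(u)$ stabilizes, and that on each eventual piece the restricted action is level-transitive, is not a routine consequence of the branch hypothesis that can be left as a remark --- it \emph{is} the assertion to be proved, just rephrased combinatorially (granting the relative version of Proposition 6.5 of \cite{GNS00}, ``finitely many ergodic components'' is essentially equivalent to ``the orbit partitions on the levels $V_l(u)$ eventually stabilize''). The conjugation argument you sketch for pushing transitivity down into $\rist_G(u)$ only uses elements of $G$ stabilizing $u$, and these conjugates of $\rist_G(w)$ land in $\rist_G(u)$ only for $w$ below $u$; nothing in that sketch controls how the orbits on $V_l(u)$ could keep splitting as $l$ grows. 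So as written the proposal does not constitute a proof.

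The paper avoids all of this with a short measure-theoretic argument that you should compare with: let $u\in V_k$ and let $A\subset X_u$ be a positive-measure set invariant under $\rist_G(u)$. Since every $\rist_G(w)$ with $w\in V_k$, $w\neq u$, acts trivially on $X_u$, the set $A$ is automatically invariant under the whole normal finite-index subgroup $\rist_G(k)=\prod_{w\in V_k}\rist_G(w)$ (this is where ``branch'' rather than ``weakly branch'' is used). Hence the $G$-saturation of $A$ is the union of the $m=[G:\rist_G(k)]$ translates $g_iA$ over coset representatives; by ergodicity of $G$ on $(X,\mu)$ this union has full measure, so $\mu(A)\geqslant 1/m$. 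Every invariant set of positive measure having measure at least $1/m$ forces the invariant sigma-algebra to have at most $m$ atoms, i.e.\ at most $m$ ergodic components. No analysis of level transitivity of $\rist_G(u)$ is needed at all; the finite index of $\rist_G(k)$ enters only through this counting bound, which is exactly the quantitative input your sketch was missing.
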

 \begin{proof} Let $u\in V_k$.
Since $G$ is branch, the subgroup $$\rist_G(k)=\prod\limits_{w\in V_k}\rist_G(w)$$ is of finite index in $G$. Let $m=|G/\rist_G(k)|$, $g_1,\ldots,g_m$ be representatives of the right cosets $G/\rist_G(k)$ and $A\subset X_u$ be a measurable subset of positive measure invariant under the action of $\rist_G(u)$. Then $A$ is invariant under the action of $\rist_G(k)$ and $$B=\bigcup\limits_{i=1,\ldots,m}g_iA$$ is invariant under the action of $G$. It follows that $\mu(B)=1$ and $\mu(A)\geqslant \tfrac{1}{m}$. This shows that
 the action of $\rist_G(u)$ on $X_u$ has at most $m$ ergodic components. \end{proof}
 Let $\mathrm v=(v_1,\ldots,v_n)$ be an $n$-tuple with pairwise distinct $v_i\in V_k$. From Lemma \ref{LmRistErg} we obtain that
 the restriction of the action $\diag_n$ to $\prod\limits_{i=1}^n\rist_G(v_i)$ admits finitely many ergodic components in $X_\mathrm v$. Since $$G_\mathrm v\supset\rist_G(k)\supset \prod\limits_{i=1}^n\rist_G(v_i)$$ we obtain that $X_\mathrm v$ has finitely many ergodic components with respect to the action $\diag_n$ of $G_\mathrm v$, which finishes the proof of Proposition \ref{ThDiagErg}.

 In the next statement we show that Proposition \ref{ThDiagErg} fails when replacing ``branch'' by ``weakly branch''.
\begin{Prop} There exists a weakly branch group $G$ acting on a regular rooted tree $T$  and $n\in\mathbb N$ such that the action $\diag_n$ of $G$ on $(\partial T^n,\Sigma^n,\mu^n)$ has no ergodic components of positive measure.\end{Prop}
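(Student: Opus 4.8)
The plan is to produce such a group for $n=2$ by passing to a larger alphabet and letting $G$ act ``as freely as the tree allows'' on part of the letters --- just enough to make $G$ weakly branch --- while letting it act only through a single odometer on the remaining part, so that the diagonal action $\diag_2$ carries a non-atomic invariant.

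Concretely, take $T=T_4$ and identify its alphabet $\mathcal A_4$ with $\{0,1\}^2$; write $(a_i,b_i)$ for the $i$-th letter of a point, so that recording the first and second coordinates gives a measure-preserving identification $X=\partial T\cong X_a\times X_b$ with $X_a=X_b=\{0,1\}^{\mathbb N}$. Let $\mathbf a\colon X\to X_a$ be the first projection, and identify $X_a$ with the group $\mathbb Z_2$ of $2$-adic integers so that the odometer $\alpha$ on the underlying binary tree is addition of $1$. Let $N<\aut(T)$ be the subgroup generated by all ``elementary $b$-swaps'': for each vertex $w$ and each $\epsilon\in\{0,1\}$, the automorphism interchanging the children $w(\epsilon,0)$ and $w(\epsilon,1)$ and fixing everything else; each of these is finitary and fixes every first coordinate of every letter. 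Let $\hat\alpha=\alpha\times\id\in\aut(T)$ act as $\alpha$ on the $X_a$-coordinates and trivially on the $X_b$-coordinates (this is a tree automorphism of $T_4$ because $\alpha$ is one of the binary tree). A routine check shows $\hat\alpha$ normalizes $N$, and clearly $\langle\hat\alpha\rangle\cap N=\{\id\}$ since $N$ fixes all first coordinates while no nonzero power of $\alpha$ does. Thus $G:=\langle N,\hat\alpha\rangle=N\rtimes\langle\hat\alpha\rangle$ with $\langle\hat\alpha\rangle\cong\mathbb Z$ is a countable subgroup of $\aut(T)$; we take $n=2$.

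Next I would verify that $G$ is weakly branch but not branch. Level transitivity: given two vertices of $V_k$ with $a$-parts $u,u'$ and $b$-parts $z,z'$, a suitable power of $\hat\alpha$ carries the first vertex to the one with $a$-part $u'$ and $b$-part still $z$ (since $\hat\alpha$ does not touch $b$-coordinates and $\alpha$ is transitive on level $k$ of the binary tree), and then an element of $N$ carries it to the target (the elementary $b$-swaps generate, on each $a$-fibre, enough automorphisms to be transitive on its $b$-parts, which one sees by adjusting the $b$-coordinates of the path one level at a time). Nontrivial rigid stabilizers: for any vertex $v$ the elementary $b$-swap at $v$ with $\epsilon=0$ lies in $N$ and is supported in $T_v$, so $\rist_G(v)\ne\{\id\}$; hence $G$ is weakly branch. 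On the other hand, if $g\in G$ is supported in a proper subtree $T_v$ then its image in $G/N\cong\mathbb Z$ must be trivial, for a nonzero power of the odometer has no fixed point and would move points of $X$ lying outside $T_v$; thus $\rist_G(v)\le N$ for every nonroot $v$, so $\rist_G(k)\le N$ for all $k\ge 1$, and as $[G:N]=\infty$ the subgroups $\rist_G(k)$ have infinite index --- so $G$ is not branch.

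Finally I would exhibit the non-atomic invariant of $\diag_2$. Let $\psi\colon G\to\mathbb Z$ be the projection of the semidirect product. Since $N$ fixes all first coordinates while $\hat\alpha$ shifts them by the odometer, $\mathbf a(gx)=\mathbf a(x)+\psi(g)$ in $\mathbb Z_2$ for every $g\in G$ and $x\in X$. Hence $\Phi\colon X^2\to\mathbb Z_2$ defined by $\Phi(x,y)=\mathbf a(x)-\mathbf a(y)$ is $\diag_2(G)$-invariant, because $\Phi(gx,gy)=(\mathbf a(x)+\psi(g))-(\mathbf a(y)+\psi(g))=\Phi(x,y)$. Under the uniform Bernoulli measure $\mathbf a(x)$ and $\mathbf a(y)$ are independent and Haar-distributed on $\mathbb Z_2$, so $\Phi_*(\mu^2)$ is Haar measure on $\mathbb Z_2$, which is non-atomic; in particular $\mu^2(\Phi^{-1}(c))=0$ for every $c\in\mathbb Z_2$ (by Fubini). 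Consequently no $\diag_2(G)$-invariant set $A$ of positive measure can be ergodic: $\Phi|_A$ would have to be a.e.\ equal to a constant $c_0$, giving $\mu^2(A)\le\mu^2(\Phi^{-1}(c_0))=0$. The part requiring the most care is the construction itself --- making $G$ large enough to satisfy both weak-branch axioms while keeping it rigid enough that the coordinate $\mathbf a$ is moved only by the global odometer $\psi$; enlarging the alphabet and confining all the branching freedom to the $b$-coordinates is precisely what reconciles these two requirements, and the remaining verifications are routine.
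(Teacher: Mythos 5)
Your construction is correct, and it implements the same underlying strategy as the paper --- engineer a weakly branch group in which one ``direction'' of the tree is acted on only through a single global transformation, so that comparing that direction across the $n$ coordinates yields a non-atomic invariant of $\diag_n$ --- but the concrete realization is genuinely different. The paper stays on the binary tree $T_2$ and takes the group generated by all switches $\sigma_v$ at even levels together with the global products $h_l=\prod_{v\in V_l}\sigma_v$ at odd levels; its invariant, for \emph{even} $n$, is the sequence of parities $r_i(v)=\sum_p v_p(2i) \pmod 2$, which cuts $X^n$ into $2^j$ invariant pieces of measure $2^{-j}$ for every $j$. You instead pass to $T_4\cong T_2\times T_2$, confine all the branching to the $b$-factor via the elementary $b$-swaps, act on the $a$-factor only through one odometer, and read off the single $\mathbb Z_2$-valued invariant $\Phi(x,y)=\mathbf a(x)-\mathbf a(y)$, whose Haar distribution kills positive-measure ergodic components. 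Your version has the advantage of working already for $n=2$ (and for all $n\geqslant 2$ via pairwise differences), where the paper's parity argument requires $n$ even; the paper's example has the advantage of living on the binary tree and being generated entirely by finitary automorphisms. All the verifications you flag as routine (that $\hat\alpha$ is an automorphism of $T_4$, that it normalizes $N$, level transitivity, nontriviality of rigid stabilizers, and $\rist_G(v)\leqslant N$ so that $G$ is not branch) do go through as you describe.
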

\begin{proof}
 Consider the binary rooted tree $T_2$. For $v\in V_n$ denote by $\sigma_v$ the switch of the
 branches of $T_2$ emitting from $v$. Consider the group $G$ generated by all elements of the form:
 \begin{equation}\label{EqGen}1)\;\;\sigma_v,\;v\in V_l,\;\;l\;\;\text{is even};\;\;\;2)\;\;
 h_l=\prod\limits_{v\in V_l}\sigma_v,\;\;l\;\;\text{is odd}.\end{equation}

  Notice that $G$ is weakly branch. Indeed, any vertex $u\in V_n$, $n\in\mathbb N$, can be encoded by a finite sequence $u(1),u(2),\ldots,u(n)$, where $u(j)\in\{0,1\}$ for all $j$. Using the generators of $G$ we can change any element of the sequence $u(j)$ not affecting the other elements of the sequence. Thus, we can obtain any other sequence of $0$-s and $1$-s, which means that $G$ is spherically transitive. On the other hand, for any vertex $u$ and any vertex $v\in T_u$ from an even level one has $\sigma_v\in G$. Therefore, $\rist_G(u)$ is nonempty.

  Observe that $G$ is not branch. Indeed, for any element $g\in\rist_G(1)<\st_G(1)$, the shortest representation of $g$ in terms of the generators of $G$ do not contain generators of the form $2)$, and thus $G/\rist_G(1)$ is infinite.

 Fix an even $n$. Let us show that the action $\diag_n$ of $G$ on $(X^n,\Sigma^n,\mu^n)$, where $X=\partial T$,
  does not have an ergodic component $A$ such that $\mu^n(A)>0$. Fix $j$. For $v=(v_1,\ldots,v_n)\in V_{2j}^n$ set
  $$r_i(v)=\sum\limits_{p=1}^n v_p(2i) \;(\text{mod}\; 2),\;\;\mr(v)=(r_1(v),\ldots,r_j(v)).$$ It is not hard
  to show that $\mr(v)$ is invariant under the action $\diag_n$ of $G$. Indeed,  for any $l$ and any $u\in V_l$ the element $\sigma_u$ changes only the $l+1$-st coordinate of any vertex. Thus, $$(\sigma_u(v_p))(2i)=v_p(2i)\;\;\text{for all}\;\;l\neq 2i-1\;\;\text{and all}\;\;p,$$ which implies that $$r_i(\diag_n(\sigma _u)(v))=r_i(\diag_n(h_l(v)))=r_i(v)$$ whenever $l\neq 2i-1$. If $l=2i-1$, then $(h_l(v_p))(2i)=1-v_p(2i)$ for each $p$. Therefore, since $n$ is even, $r_i(h_l(v))=r_i(v)$.

  Let  $\mr=(r_1,\ldots,r_j)\in\{0,1\}^j$. Denote
  $$Y(\mr)=\bigcup\limits_{v\in V_{2j}^n,\mr(v)=\mr} X_v.$$ Then sets $Y(\mr)$ are invariant under the
  action of $\diag_n(G)$, form a partition of $X^n$ and are of measure $2^{-j}$ each. Thus, $X^n$
  can be split into invariant subsets of arbitrarily small measure and so cannot have ergodic components
  of positive measure.
\end{proof}
\section{Characters}\label{SecChar}
Fix a countable branch group $G$ acting on a regular rooted tree $T$. As before, let $X=\partial T$, let $\Sigma$ be the sigma-algebra of  all measurable subsets of $X$ and let $\mu$ be the unique invariant ergodic measure on $X$. Fix $n\in \mathbb{N}$ and consider the action $\diag_n$ of $G$ on  $X^n_\sym$. Recall that  $\mathcal{E}_n$ stands for the set of ergodic components of
 this action. By Proposition \ref{ThDiagErg}, $\mathcal E_n$ is countable. For $\alpha\in\mathcal{E}_n$ denote by $X_\alpha$ the corresponding ergodic component, by
 $\Sigma_\alpha$ the sigma-algebra of measurable subsets of $X_\alpha$ and by  $\mu_\alpha$ the normalized restriction
of the measure $\mu^n_\sym$ to $X_\alpha$ (so that $\mu_\alpha(X_\alpha)=1$). Let $\pi_\alpha$ be the corresponding groupoid representation of $G$ acting in $L^2(\mathcal R_\alpha,\nu_\alpha).$ As before denote by $\chi_\alpha$ the corresponding character of $G$:
$$\chi_\alpha(g)=\mu_\alpha(\fix(\diag_n(g))\cap X_\alpha).$$

\begin{proof}[\textbf{Proof of Proposition \ref{ThDistChar}}]
Let $\alpha\in\mathcal{E}_n$ and $\beta\in\mathcal{E}_m$ for some $n,m$.
Assume that $\chi_\alpha=\chi_\beta$. Then $$\mu_\alpha(\fix(\diag_n(g)))=\mu_\beta(\fix(\diag_m(g)))$$
 for all $g\in G$. Set $$Y_\alpha=\gamma_n^{-1}(X_\alpha),\;Y_\beta =
 \gamma_m^{-1}(X_\beta),\;c_\alpha=\mu^n(Y_\alpha),\;c_\beta=\mu^m(Y_\beta).$$
 By absolute non-freeness of the action of $G$ on $(X,\Sigma,\mu)$ (see Proposition \ref{PropBrAbsNonfree}) and the definition
 of the measures $\mu_\alpha,\mu_\beta$
 we obtain that \begin{equation}
 \label{EqMuAlBe}\tfrac{1}{c_\alpha}\mu^n(A^n\cap Y_\alpha)=\tfrac{1}{c_\beta}
 \mu^m(A^m\cap Y_\beta)\;\;\text{for all}\;\;A\in \Sigma.\end{equation}
 \vskip 0.3cm
\noindent{\bf Case 1:} $n=m$ and
$\alpha\neq\beta$. Introduce inductively collections $\mathcal B_k$ of measurable subsets of $X^n$ as follows.
 Set $\mathcal B_0=\{A^n:A\subset X\;\text{measurable}\}$. Denote by $\mathcal B_{k+1}$ the set of subsets
 of the form $B_1\sqcup B_2$, where $B_1,B_2$ are disjoint subsets from $\mathcal B_k$, or $B_1\setminus B_2$,
 where $B_1\supset B_2$ are subsets from $\mathcal B_k$. Clearly, $\mathcal B_{k+1}\supset \mathcal B_k$ for all $k$.
  From \eqref{EqMuAlBe}
 using induction it is easy to show that
 $$\tfrac{1}{c_\alpha}\mu^n(C\cap Y_\alpha)=\tfrac{1}{c_\beta}
 \mu^n(C\cap Y_\beta)\;\;\text{for any}\;\;k\;\;\text{and all}\;\;C\in\mathcal B_k.$$
 Let us show by induction that for any $k$ there exists $N(k)$ such that for any $B_1,B_2\in\mathcal B_k$ one has:
 $$B_1\cup B_2,B_1\cap B_2,B_1\setminus B_2\in \mathcal B_{N(k)}.$$
 Since $B_1\setminus B_2=
 B_1\setminus (B_1\cap B_2)$ and $B_1\cup B_2=(B_1\setminus B_2)\sqcup
 (B_1\cap B_2)\sqcup (B_2\setminus B_1)$ it is sufficient to consider the $B_1\cap B_2$ case only. \\
 \emph{Base of induction}, when $k=0$, is obvious, since $A_1^n\cap A_2^n=(A_1\cap A_2)^n$.\\
 \emph{Step of induction}. Assume that the statement is true for $k$. Let $B_1,B_2\in \mathcal B_{k+1}$.
  Then for $i=1,2$ there exists $B_{i1}, B_{i2}\in\mathcal B_k$ such that either
  $B_i=B_{i1}\sqcup B_{i2}$ or $B_i=B_{i1}\setminus
  B_{i2}$ and $B_{i1}\supset B_{i2}$. Assume for instance that
  $$B_1= B_{11}\sqcup B_{12}\;\;\text{and}\;\;B_2=B_{21}\setminus
  B_{22},\;\;\text{where}\;\;B_{21}\supset B_{22}.$$ Then
  $$B_1\cap B_2=((B_{11}\cap B_{21})\sqcup (B_{12}\cap B_{21}))\setminus
((B_{11}\cap B_{22})\sqcup (B_{12}\cap B_{22}))\in\mathcal B_{N(k)+2},$$
  since $B_{1i}\cap B_{2j}\in \mathcal B_{N(k)}$ for $i,j=1,2$.
  The other three cases can be treated similarly.

   Since the sets of the form $A^n,A\in\Sigma$ generate $\gamma_n^{-1}(\Sigma_\sym^n)$  we obtain that $\tfrac{1}{c_\alpha}\mu^n(C\cap Y_\alpha)=\tfrac{1}{c_\beta}
\mu^n(C\cap Y_\beta)$
 for all $C\in \gamma_n^{-1}(\Sigma_\sym^n)$. In particular, for $C=Y_\alpha$ we get: $\mu^n(Y_\alpha)=0.$ Obtained
  contradiction finishes the proof in the case $n=m$.

\noindent{\bf Case 2:} $n\neq m$. Without loss of generality let $n>m$. From \eqref{EqMuAlBe} and
   the arguments above it is not hard
  to show that for any $k$, any $C_1,C_2,\ldots,C_k\in\Sigma$ and any expression $F(B_1,B_2,\ldots,B_k)$ involving
  only operations of union, intersection and difference of the sets one has
  \begin{equation}\label{EqMuAlBe2}\tfrac{1}{c_\alpha}\mu^n(F(C_1^n,\ldots,C_k^n)\cap Y_\alpha)=\tfrac{1}{c_\beta}
  \mu^m(F(C_1^m,\ldots,C_k^m)\cap Y_\beta).\end{equation}
  Find pairwise disjoint sets $A_1,\ldots, A_n\in\Sigma$ such that
  $$\mu^n(\sym(A_1\times A_2\times \cdots\times A_n) \cap Y_\alpha)>0.$$ Set $k=n+1$,
   $C_{n+1}=\cup_{i=1}^n A_i$ and $C_j=C_{n+1}\setminus A_j$ for
   $j=1,\ldots,n$. Let $$F(B_1,\ldots,B_{n+1})=B_{n+1}\setminus \cup_{j=1}^n B_j.$$
   Then $F(C_1^n,\ldots,C_k^n)=\sym(A_1\times A_2\times \cdots\times A_n)$ and
   $F(C_1^m,\ldots,C_k^m)=\varnothing$. From \eqref{EqMuAlBe2} we obtain
   that $\mu^n(\sym(A_1\times A_2\times \cdots\times A_n)\cap Y_\alpha)=0$. This contradiction finishes  the proof.
\end{proof}

\section{Appendix.}\label{SecApp}
Here we show that both "if" and "only if" directions of Vershik's Conjecture \ref{ClVer} are false by providing corresponding counterexamples.
\begin{Prop} There exists an essentially free (and thus not totally non-free) ergodic measure-preserving action of a countable group $G$ on a Lebesgue space $(X,\mu)$ such that the associated character $\chi(g)=\mu(\fix(g)),g\in G$ is indecomposable.
\end{Prop}
\begin{proof}
First notice that if the action of a countable ICC (infinite conjugacy classes) group $G$ on a Lebesgue space $(X,\mu)$ is measure-preserving, ergodic and essentially free (that is $\mu(\fix(g))=0$ for all $g\neq e$) then the associated groupoid representation is regular and the associated character $\chi$ is the regular character (equal to $1$ at $e$ and zero at all other elements $g\in G$). It is known that a countable group is ICC if and only if the regular representation is a factor representation (see \cite{MuvN}, Lemma 5.3.4). It follows that $\chi$ is indecomposable.

To construct an example of such action take $X=U(n),n\in\mathbb N,n\geqslant 2$, the group of $n\times n$ unitary matrices. Since $U(n)$ is compact it admits a unique left Haar measure $\mu$ with $\mu(X)=1$. Let $\Gamma$ be a dense countable subgroup of $U(n)$ which does not contain any diagonal matrices except the identity. We leave as an exercise for the reader to check that $\Gamma$ is ICC and the action of $\Gamma$ on $(X,\mu)$ by left translations is measure-preserving, ergodic and essentially free.
\end{proof}
\begin{Prop} There exists an ergodic totally non-free action of a countable group $G$ on a Lebesgue space $(X,\mu)$ such that the corresponding character $\chi(g)=\mu(\fix(g)),g\in G,$ is not indecomposable.
\end{Prop}
\begin{proof} In the proof we introduce three group actions and corresponding groupoid constructions. For convenience, to distinguish the objects corresponding to different groups we put index of the group next to the corresponding object.

First, let $H$ be any group of permutations on a finite set $A$ (equipped with the uniform measure $u$) which is transitive and extremely non-free. For example, one can set $H$ to be the symmetric group $\sym(n)$ acting on $H=\{1,2,\ldots,n\}$. Let $(\pi_H,\mathcal H_H,\xi_H)$ be the corresponding groupoid construction. Notice that $\mathcal R_H=A\times A$ in this case. The vector
$$\eta=\frac{1}{|H|}\sum\limits_{h\in H}\pi_H(h)\xi_H$$ is a non-zero $\pi_H(H)$-invariant vector. Thus, $\pi_H$ contains a trivial component but is not trivial, therefore $\chi_H$ is not indecomposable. This provides the required example for the case of a finite space $X$.

To construct an example with $(X,\mu)$ non-atomic take any countable group $S$ with an ergodic totally non-free action on an non-atomic Lebesgue space $(Y,\lambda)$. Consider the product action of $G=H\times S$ on $(A\times Y,u\times\lambda)$. Observe that in this space $\fix(h,s)=\fix(h)\times \fix(s)$. Therefore, this action is totally non-free. Clearly, it is ergodic. Further, let $(\pi_G,\mathcal H_G,\xi_G)$ be the groupoid construction associated to the action of $G$. Then the subspace $\mathcal\eta\otimes L^2(\mathcal R_S,\nu_S)$ is $\pi_G(G)$-invariant and $\pi_G(H\times \{e_S\})$ acts trivially on it. Since $\pi_G|_{H\times \{e_S\}}$ is not trivial on cyclic hull of $\xi_G$ we obtain that $\chi_G$ is not indecomposable.
\end{proof}

\subsection*{Acknowledgement} The authors thank Anatoly Vershik and Tatiana Smirnova-Nagnibeda for
fruitful discussions at the preliminary stage of our investigation. We are grateful to
 Pierre de la Harpe and Bachir Bekka for careful reading of the first and revised versions of the paper,
 useful suggestions and critics. In particular, we are grateful to B. Bekka for the indication of the gap
 in our unsuccessful attempt to provide, in a preliminary version of the paper, a detailed proof
  of the "if" direction of  Vershik's Theorem 10 from \cite{V11} that lead us to finding counterexamples to it.
  The authors are thankful to Ken Dykema, David Kerr and Konstantin Medynets
   for useful remarks. 

 The authors acknowledge the support of  the  Swiss  National  Science Foundation. The second author acknowledges the support of the US National Science Foundation under the grant DMS-1207699 and of the US National Security Agency under the grant H98230-15-1-0328.



\begin{thebibliography}{99}

\bibitem{BG} L. Bartholdi and R. I. Grigorchuk, \emph{On the Spectrum of Hecke Type Operators Related to Some Fractal Groups,} Tr. Mat. Inst. im. V.A. Steklova, Ross. Akad. Nauk {\bf 231} (2000), 5-45  [Proc. Steklov Inst. Math. {\bf 231} (2000), 1-41.

\bibitem{BGS03} L. Bartholdi, R. I. Grigorchuk, and Z. \v{S}uni\'{c}, \emph{Branch Groups}, Handbook of Algebra (North-Holland, Amsterdam, 2003), Vol. 3, pp. 989-1112.

\bibitem{Be07} B. Bekka, \emph{Operator-algebraic superridigity for $SL_n(\mathbb Z)$, $n=3$}, Invent. Math. {\bf 169} (2007), no. 2, pp. 401-425.

\bibitem{Bowen} L. Bowen, \emph{Invariant random subgroups of the free group.} Groups Geom. Dyn. {\bf 9} (2015), no. 3, 891-916.


\bibitem{Dix69} J. Dixmier., \emph{Les C$^*$-alg\`ebres et leurs repr\'esentations}, Gauthier-
Villars 1969.

\bibitem{DM13 AF} A. Dudko and K. Medynets, \emph{On Characters of Inductive Limits of Symmetric Groups}, Journal of Functional Analysis, {\bf 264} (2013), no.7, 1565-1598.

\bibitem{DM14 Thompson} A. Dudko and K. Medynets, \emph{Finite Factor Representations of Higman-Thompson groups}, Groups, Geometry, and Dynamics, {\bf 8} (2014).

\bibitem{DN07 Wreath} A. Dudko and N. Nessonov, \emph{A description of characters on the infinite wreath product}, Methods Funct. Anal. Topology {\bf 13} (2007), no. 4, 301-317.

\bibitem{FM2} J. Feldman, C. Moore, \emph{Ergodic equivalence relations, cohomology, and von Neumann algebras. II,}
 Trans. Amer. Math. Soc., {\bf 234} (1977), no. 2, 325--359.

 \bibitem{Glas03} E. Glasner, \emph{Ergodic theory via joinings}, Math. Surv. and Mon., {\bf 101},
 Amer. Math. Soc. (2003).

 \bibitem{Gr80} R. I. Grigorchuk, \emph{Burnside's Problem on Periodic Groups,} Funkts. Anal. Prilozh. {\bf 14} (1), 53-54 (1980) [Funct. Anal. Appl. {\bf 14}, 41-43 (1980)].

 \bibitem{Gr84} R. I. Grigorchuk, \emph{Degrees of growth of finitely generated groups, and the theory of invariant means,} Izv.
Akad. Nauk SSSR, Ser. Mat. {\bf 48} (5), 939-985 (1984) [Math. USSR, Izv. {\bf 25}, 259-300 (1985)].

\bibitem{Gr00} R. I. Grigorchuk, \emph{Just Infinite Branch Groups,} New Horizons in Pro-p Groups (Birkhauser, Boston, MA,
2000), Prog. Math. {\bf 184}, pp. 121-179.

\bibitem{Grig11} R. I. Grigorchuk, \emph{Some topics in the dynamics of group actions on rooted trees},
Proceedings of the Steklov Institute of Mathematics,
Volume 273, Issue 1 (2011), pp 64-175.

\bibitem{GNS00} R. I. Grigorchuk, V. V. Nekrashevich, and V. I. Sushchanskii, \emph{Automata, Dynamical Systems, and Groups,} Tr.
Mat. Inst. im. V.A. Steklova, Ross. Akad. Nauk {\bf 231}, 134-214 (2000) [Proc. Steklov Inst. Math. {\bf 231}, 128-203
(2000)].

\bibitem{KR1} R.~Kadison and J.~Ringrose, \emph{Fundamentals of the theory of operator algebras. Vol. I. Elementary
theory.} Pure and Applied Mathematics, 100. Academic Press, Inc. [Harcourt Brace Jovanovich, Publishers], New York,
1983.

\bibitem{KR2} R.~Kadison and J.~Ringrose,  \emph{Fundamentals of the theory of operator algebras.
Vol. II. Advanced theory.} Pure and Applied Mathematics, 100. Academic Press, Inc., Orlando, FL, 1986.

\bibitem{VK} S. Kerov and A. Vershik, \emph{Characters and factor representations of the infinite symmetric group},
Soviet Math. Dokl., {\bf 23 }(1981) No.2, 389-392.

\bibitem{Mack} G. W. Mackey, \emph{ The Theory of Unitary Group Representations},
 Univ. Chicago Press, Chicago, 1976, Chicago
Lect. Math.

\bibitem{MuvN} F. Murray and J. von Neumann, \emph{On rings of operators}, IV. Ann. of
Math. (2) {\bf 44} (1943), 716-808.

\bibitem{Nekr} V. Nekrashevych, \emph{Self-similar Groups,} Am. Math. Soc., Providence, RI, 2005, Math. Surv. Monogr. {\bf 117}.

\bibitem{Ok} A.Okounkov, \emph{On representations of
 the infinite symmetric group}, Zap. Nauchn. Sem. S.-Peterburg. Otdel. Mat. Inst. Steklov. (POMI) {\bf 240}, 1997, Teor. Predst. Din. Sist. Komb. i Algoritm. Metody. 2, 166-228, 294; translation in
J. Math. Sci. (New York) {\bf 96} (1999), no. 5, 3550-3589.




\bibitem{Ov71} S. V. Ovchinnikov, \emph{Positive-definite functions on Chevalley groups}, Funkts. Anal. Prilozh., {\bf 5}:1 (1971), 91-92.

\bibitem{Pet} J. Peterson, \emph{Character rigidity for lattices in higher-rank groups}, preprint.

\bibitem{PetThom} J. Peterson and A. Thom,  \emph{Character rigidty for special linear groups},  J. Reine Angew. Math. {\bf 716} (2016), 207-228.

\bibitem{Rokh}  V. A. Rokhlin, \emph{On the basic ideas of  measure theory}, Mat. Sb. {\bf 25} (1) (1949), 107-150. Engl. transl.: \emph{On the fundamental ideas of measure theory}, (Am. Math. Soc., Providence, RI, 1952), AMS Transl., No. 71.

\bibitem{Sk}
 Skudlarek H. L., {\it Die unzerlegbaren Charaktere einiger
 diskreter Gruppen}, Math. Annal., {\bf 233} (1976), 213--231.


%
%

\bibitem{V11} A.~Vershik, {\it Nonfree Actions of Countable Groups and their Characters}.  (Russian)
Zapiski Nauchn. Semin. POMI \textbf{378} (2010), Teoriya Predstavleniĭ, Dinamicheskie Sistemy, Kombinatornye Metody. XVIII, 5-16.
English translation in J. Math. Sci. (N.Y.) {\bf 174} (2010), no. 1, 1-6

\bibitem{V12} A. Vershik, \emph{Totally non-free actions and the infinite symmetric group,} Mosc. Math. J., {\bf 12} (2012), no. 1, 193-212.

\end{thebibliography}
\end{document}